\newtheorem{thm}{Theorem}[section]
\newtheorem{lem}[thm]{Lemma}
\newtheorem{prop}[thm]{Proposition}
\newtheorem{conj}[thm]{Conjecture}
\newtheorem*{thmkurz}{Kurzweil's Theorem (1955)}
\newtheorem*{thmkhin}{Khintchine's Theorem (1926)}
\newtheorem*{thmgal}{Gallagher's Theorem (1962)}
\newtheorem*{thmBHKV}{Theorem BHKV (2010)}
\newtheorem*{thmKTV}{Theorem KTV (2006)}
\theoremstyle{definition}
\newtheorem{defin}[thm]{Definition}
\newtheorem*{xrem}{Remark}
\numberwithin{equation}{section}
\newcommand{\norm}[1]{\ensuremath{\left\Vert #1 \right\Vert}}
\newcommand{\infabs}[1]{\ensuremath{\left\vert #1 \right\vert}}
\newcommand{\Bad}{\text{\textup{\bf Bad}}}
\begin{document}

\baselineskip=17pt

\title[T.I.D.A. \& badly approximable sets]{Twisted inhomogeneous Diophantine approximation and badly approximable sets}

\author[S. Harrap]{Stephen Harrap}
\address{Department of Mathematics, University of York,
  Heslington, York, YO10 5DD, United Kingdom}
\email{sgh111@york.ac.uk}

\date{}

\begin{abstract}
	For any real pair $i, j \geq 0$ with $i+j=1$ let $\Bad(i, j)$ denote the set 
	of $(i, j)$-badly approximable pairs. That is, $\Bad(i, j)$ consists of
	irrational vectors $\mathbf{x}:=(x_1, x_2) \in \mathbb{R}^2$ for which there 
	exists a positive constant $c(\mathbf{x})$ such that
	\begin{equation*}
		\max \left\{\norm{qx_1}^{1/i}, \norm{qx_2}^{1/j} \right\} > 	
		c(\mathbf{x})/q \quad \quad \forall \, q \in \mathbb{N}.
	\end{equation*}
	A new characterization of
	$\Bad(i, j)$ in terms of `well-approximable' vectors in the area of 	
	`twisted' inhomogeneous Diophantine approximation is established.
	In addition, it is shown that $\Bad^{\mathbf{x}}(i, j)$, the `twisted' 	
	inhomogeneous analogue of $\Bad(i, j)$, has full Hausdorff dimension~$2$ 	
	when $\mathbf{x}$ is chosen from $\Bad(i, j)$. The main results naturally 	
	generalise the $i=j=1/2$ work of Kurzweil.
\end{abstract}

\subjclass[2010]{Primary 11K60; Secondary 11J83}

\keywords{inhomogeneous Diophantine approximation,  simultaneous badly approximable,  Hausdorff dimension}

\maketitle

\section{Introduction}
\label{sec:introduction}

\subsection{Background -- the homogeneous theory}
\label{sec:Background1}

A classical result due to Dirichlet states that for any real number $x$ there exist infinitely many natural numbers
$q$ such that
\begin{equation}
	\label{eqn:dirichlet}
	\norm{qx}\leq \frac{1}{q},
\end{equation}
where $\norm{\cdot}$ denotes the distance to the nearest integer. This result can easily be generalised to higher
dimensions. In particular, the following `weighted' simultaneous version of the above statement is valid.
%can be proven using the pigeon-hole principal developed by Dirichlet in his proof.
Choose any positive real numbers $i$ and $j$ satisfying
\begin{equation}
	\label{eqn:ijconditions}
	i, j \, \geq \, 0 \quad \text{ and } \quad i+j \, = \, 1.
\end{equation}
Then, for any vector $\mathbf{x} \in
\mathbb{R}^2$ there exist infinitely many natural numbers $q$ such that
\begin{equation}
	\label{eqn:wsdirichlet}
	\max \left\{\norm{qx_1}^{1/i}, \norm{qx_2}^{1/j} \right\} \leq \frac{1}{q}.
\end{equation}
Here, without loss of generality, if $i=0$ we employ the convention that $\norm{x}^{1/i}=0$ and so
the above statement reduces to Dirichlet's original result.
%that of (\ref{eqn:dirichlet}).
It is natural to ask whether the right hand side of inequality (\ref{eqn:wsdirichlet}) can in general be tightened;
that is, if
$1/q$ may be replaced by $c/q$ for some absolute constant $c \in (0, 1)$ whilst still allowing
(\ref{eqn:wsdirichlet}) to hold infinitely often for all real vectors.  It is still an open problem as to
whether there exists an `optimal' constant in the sense that the statement
holds only finitely often for at least one real vector if it is replaced by any lesser constant.
Conversely, in the one-dimensional setting, concerning statement (\ref{eqn:dirichlet}), such an `optimal' constant (namely $1/\sqrt{5}$) was found by
Hurwitz (e.g., Theorems 193 \& 194 in \cite[Chapter XI]{HW}).

The above discussion motivates the study of real vectors $\mathbf{x}$ for which the right hand side of (\ref{eqn:wsdirichlet}) cannot be improved by an
arbitrary positive constant. Throughout, we will impose the following natural restriction on these vectors.
We say $\mathbf{x}:=(x_1, x_2)$ is \textit{irrational} (abbreviated \textit{irr.}) if its components $x_i$ together with 1 are linearly independent over the rationals. 
\begin{defin}
	An irrational vector  $\mathbf{x}$ is \textit{$(i, j)$-badly approximable} if there exists a constant $c(\mathbf{x})>0$ such that
	\begin{equation*}
		\max \left\{\norm{qx_1}^{1/i}, \norm{qx_2}^{1/j} \right\} \, \, > \, \, \frac{c(\mathbf{x})}{q} \quad \quad \forall \, q
		\in \mathbb{N}.
	\end{equation*}
	The set of all such vectors will be denoted $\Bad(i, j)$.
\end{defin}
The results of this paper (for $i,j>0$) do remain true when $\mathbf{x}$ is not assumed to be irrational in the above and later definitions. However,
we choose to avoid this degenerate case for the sake of clarity.
Furthermore, all of the sets and arguments considered in this paper are invariant under integer translation,
so there will be no loss in generality in assuming throughout that all vectors are confined to the
unit square (or the unit $n$-cube when in higher dimensions) unless otherwise stated.
Accordingly, for example, if $i=0$ the
set $\Bad(0, 1)$ will be identified with $[0,1]\times \Bad$, where $\Bad$ is the standard one dimensional set of badly
approximable numbers. In other words, $\Bad(0, 1)$ consists of vectors $\mathbf{x}$ with $x_1 \in [0,1]$ and
\begin{equation*}
	x_2 \in \Bad:=\left\{\text{irr. } x \in [0, 1]: \, \exists \, c(x)>0 \text{ s.t.} \norm{qx} > \frac{c(x)}{q} \quad
	\forall \, q \in \mathbb{N} \right\}.
\end{equation*}
\begin{defin}
	A mapping $\psi : \mathbb{N} \rightarrow \mathbb{R}$ is an \textit{approximating function} if $\psi$ is strictly positive
	and non-increasing.
\end{defin}
\begin{defin}
	For any approximating function $\psi$, define $\textbf{W}_{(i, j)}(\psi)$ to be the set of 
	vectors $\mathbf{x} \in [0, 1]^2$ such that the inequality
	\begin{equation*}
		\max \left\{\norm{qx_1}^{1/i}, \norm{qx_2}^{1/j} \right\} \, \, \leq \, \,  \psi(q)
	\end{equation*}
	holds for infinitely many natural numbers $q$.
\end{defin}

Application of the following classical theorem of Khintchine \cite{Khi24} yields that for every pair of reals $i$, $j$ satisfying (\ref{eqn:ijconditions}) the set $\Bad(i, j)$ is of two-dimensional
Lebesgue measure zero.
%The proof of the theorem is clear via simple modification of the original.
Lebesgue measure will hereafter be denoted $\mu$.

\begin{thmkhin}
	For any pair of reals $i$, $j$ satisfying (\ref{eqn:ijconditions}) and any approximating function $\psi$ we have
	%: \mathbb{N}	\rightarrow \mathbb{R}_{>0}$
	\begin{equation*}
		\mu \left( \textbf{W}_{(i, j)}(\psi) \right) =
		\begin{cases}
			0, & \displaystyle\sum_{r=1}^{\infty} \psi(r) \, < \, \infty. \\
  		& \\
  		1, & \displaystyle\sum_{r=1}^{\infty} \psi(r) \, = \, \infty.
  	\end{cases}
	\end{equation*}
\end{thmkhin}
\noindent
It is worth emphasising here that the choice of approximating function $\psi$ is completely irrelevant once the reals $i, j$ have been fixed. We also mention here that in the  $i=j=1/2$ case the monotonicity restriction imposed on $\psi$ can be relaxed (see \cite{Gal2} for details). However, whether this is true in general is still an 
open problem.

The question of whether
each null set $\Bad(i, j)$ is non-empty was formally\footnote[2]{The arguements used by Davenport in \cite{Dav} to show that $\Bad(1/2, 1/2)$ is uncountable
can easily be adapted to show that $\Bad(i, j)$ is uncountable for every choice of reals $i, j$ satisfying (\ref{eqn:ijconditions}).}
 answered by Pollington \& Velani \cite{PV}
who showed that for every choice of reals $i$, $j$ satisfying (\ref{eqn:ijconditions}) we have
\begin{equation}
	\label{eqn:pv}
	\dim \left( \Bad(i, j) \cap \Bad(1, 0) \cap \Bad(0, 1) \right) = \dim \left( [0,1]^2 \right) = 2.
\end{equation}
Here, and throughout, `$\dim$' denotes standard Hausdorff dimension.
With this result in mind, the aim of this paper is to obtain an expression for $\Bad(i, j)$ in terms of
`well-approximable' vectors in the area of `twisted' inhomogeneous Diophantine approximation.

\subsection{Background -- the `twisted' inhomogeneous theory}
\label{sec:Background2}

Another result of Khintchine (see for example \cite[Chapter $10$, Theorem $10.2$]{Hua}) states that for any irrational $x$ and any real $\gamma$ there exist infinitely many
natural numbers $q$ such that
\begin{equation}
	\label{eqn:khintchine}
	\norm{qx-\gamma}\leq \frac{1+\epsilon}{\sqrt{5}q},
\end{equation}
where $\epsilon>0$ is an arbitrary constant. The inequality is `optimal' and differs from Hurwitz's homogeneous
`$\gamma=0$' theorem by only the constant $\epsilon$. When certain restrictions are placed
on the choice of $\gamma$, a tighter `optimal' inequality was found to hold by Minkowski \cite{31.0213.02}:
%(for example, see Theorems IIA \& IIB of \cite[Chapter III]{Cas57})
The right hand side of (\ref{eqn:khintchine}) can be replaced with $1/(4q)$ if it
is assumed that $\gamma$ is not of the form $\gamma=mx+n$ for some integers $m$ and $n$. Both of these statements
lead to the implication that the sequence $\left\{qx \right\}_{q \in \mathbb{N}}$ modulo one is dense in the
unit interval for any irrational $x$.
% a conclusion reached earlier by Tchebychef \cite[Chapter $10$, Theorem $10.1$]{Hua} using a weaker form of (\ref{eqn:khintchine}).
Moreover, Kronecker's Theorem (see \cite{Kron}) implies that the sequence
$\left\{q\mathbf{x} \right\}_{q \in \mathbb{Z}}$ modulo one is dense in $[0,1]^2$ for any irrational vector
$\mathbf{x}$. Furthermore, the sequence is uniformly distributed.
This naturally leads to the concept of approximating real vectors
$\boldsymbol\gamma$ in $[0, 1]^2$ by the sequence $\left\{q\mathbf{x} \right\}_{q \in \mathbb{N}}$ modulo one with increasing
degrees of accuracy. For obvious reasons we call this approach `twisted' Diophantine approximation. 

\begin{defin}
For each fixed approximating function $\psi$, any irrational vector $\mathbf{x}$ and each pair $i$, $j$ satisfying (\ref{eqn:ijconditions})
 define $\textbf{W}_{(i, j)}^{\mathbf{x}}(\psi)$ to be the set of vectors $\boldsymbol\gamma:=(\gamma_1, \gamma_2) \in [0,1]^2$
such that the inequality
\begin{equation*}
	\max\left\{ \norm{qx_1-\gamma_1}^{1/i}, \norm{qx_2-\gamma_2}^{1/j} \right\} \leq \psi(\infabs{q})
\end{equation*}
holds for infinitely many non-zero integers $q$.
\end{defin}

Establishing a Khintchine-type law (an analogue to Khintchine's Theorem) for the Lebesgue measure of
$\textbf{W}_{(i, j)}^{\mathbf{x}}(\psi)$ is more difficult than in the homogeneous case. That said, by
utilising the Borel-Cantelli lemma from probability theory it is easy to show that for every $i$, $j$ satisfying
(\ref{eqn:ijconditions}), any irrational $\mathbf{x}$ and every
approximating function~$\psi$ we have
\begin{equation*}
	\mu \left( \textbf{W}_{(i, j)}^{\mathbf{x}}(\psi) \right) = 0 \quad \text{if} \quad
	\displaystyle\sum_{r=1}^{\infty} \psi(r) \, < \, \infty.
\end{equation*}
One might therefore expect that no matter what the choice of reals $i$, $j$, irrational $\mathbf{x}$ or approximating function $\psi$ we
should be able to conclude that $\mu \left( \textbf{W}_{(i, j)}^{\mathbf{x}}(\psi) \right) = 1$ if the above sum
diverges. However, the following statement, a consequence of Theorem~\ref{thm:doublymetric} (see Appendix),
suggests that once the reals $i$, $j$ have been fixed the set of irrational vectors
for which we do obtain a set of full measure is dependent on the choice of approximating function. This subtle distinction
is what makes the metrical theory in the `twisted' setting more delicate, and sophisticated, than its standard homogeneous counterpart.

\begin{thm}[Twisted Khintchine-type Theorem]
	\label{thm:fullmeasure}
	Let $\psi$ be a fixed approximating function. Then, for $\mu$-almost all irrational vectors
	$\mathbf{x} \in [0,1]^2$
	\begin{equation*}
		\mu \left( \textbf{W}_{(i, j)}^{\mathbf{x}}(\psi) \right) = 1 \quad \text{if} \quad
		\displaystyle\sum_{r=1}^{\infty} \psi(r) \, = \, \infty.
	\end{equation*}
\end{thm}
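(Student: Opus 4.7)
The plan is to apply Fubini's theorem to reduce the doubly-metric statement to a one-variable Borel-Cantelli question. Define
\begin{equation*}
E \, := \, \left\{(\mathbf{x}, \boldsymbol\gamma) \in [0,1]^2 \times [0,1]^2 : \boldsymbol\gamma \in \textbf{W}_{(i,j)}^{\mathbf{x}}(\psi)\right\}
\end{equation*}
and write $E_{\mathbf{x}}$ and $E^{\boldsymbol\gamma}$ for its $\mathbf{x}$-slice and $\boldsymbol\gamma$-slice respectively. The theorem's conclusion, $\mu(E_{\mathbf{x}}) = 1$ for $\mu$-almost every $\mathbf{x}$, will follow from $(\mu \times \mu)(E) = 1$, which in turn reduces (by Fubini applied to the $\boldsymbol\gamma$-coordinate) to showing that $\mu(E^{\boldsymbol\gamma}) = 1$ for every fixed $\boldsymbol\gamma \in [0,1]^2$.

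Fix such a $\boldsymbol\gamma$ and, for each non-zero integer $q$, set
\begin{equation*}
A_q \, := \, \left\{\mathbf{x} \in [0,1]^2 : \norm{qx_1 - \gamma_1} \leq \psi(\abs{q})^i,\ \norm{qx_2 - \gamma_2} \leq \psi(\abs{q})^j \right\},
\end{equation*}
so that $E^{\boldsymbol\gamma} = \limsup_{\abs{q} \to \infty} A_q$. Since the two coordinate conditions decouple, a direct volume computation yields $\mu(A_q) = 4\psi(\abs{q})^{i+j} = 4\psi(\abs{q})$ as soon as $\psi(\abs{q})$ is small enough, and hence the hypothesis $\sum_{r=1}^{\infty}\psi(r) = \infty$ forces $\sum_q \mu(A_q) = \infty$. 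The natural tool is then the Chung-Erd\H{o}s form of the divergent Borel-Cantelli lemma, which will give $\mu(E^{\boldsymbol\gamma}) > 0$ provided one can establish a quasi-independence bound of the form
\begin{equation*}
\mu(A_p \cap A_q) \, \leq \, C \, \mu(A_p) \mu(A_q)
\end{equation*}
on average over pairs $p \neq q$ with $\abs{p}, \abs{q} \leq N$ as $N \to \infty$. A standard zero-one law for limsup sets of this shape (of Cassels-Gallagher flavour) will then upgrade positive measure to full measure.

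The crux of the argument is the quasi-independence estimate. Since $A_q$ factors across the two coordinates, the bound on $\mu(A_p \cap A_q)$ reduces to controlling the one-dimensional quantity $\mu_1\bigl\{x \in [0,1] : \norm{px - \gamma} \leq \alpha,\ \norm{qx - \gamma} \leq \beta\bigr\}$. This can be approached geometrically, by counting intersections of the $p$ intervals of width $2\alpha/p$ with the $q$ intervals of width $2\beta/q$, or more cleanly via Fourier expansion, where the cross-integral isolates the lattice of pairs $(n,m)$ with $np + mq = 0$, producing the expected main term $4\alpha\beta$ together with an error controlled by $\gcd(p,q)$. The main obstacle, as I see it, lies in precisely this $\gcd$-dependence: no \emph{uniform} pairwise quasi-independence bound can hold whenever $\gcd(p,q)$ is large, so the inequality must be absorbed on average by grouping pairs by their common divisor and exploiting the sparseness of those $(p,q)$ with $\gcd(p,q)=d$ inside $\{1,\dots,N\}^2$. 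Once the averaged bound is in hand, Chung-Erd\H{o}s together with the zero-one law delivers $\mu(E^{\boldsymbol\gamma}) = 1$, and the Fubini reduction above then closes the argument.
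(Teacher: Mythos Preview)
Your approach is valid in outline but takes a genuinely different---and harder---route from the paper. You fix $\boldsymbol\gamma$ first and then run a divergent Borel--Cantelli argument in $\mathbf{x}$ alone; this forces you to confront the $\gcd(p,q)$ obstruction to pairwise quasi-independence, which you correctly identify and propose to absorb by averaging over pairs. Carried out in full, this is essentially a proof of the inhomogeneous Khintchine theorem with fixed shift, and it yields the stronger conclusion $\mu(E^{\boldsymbol\gamma})=1$ for \emph{every} $\boldsymbol\gamma$. The paper instead keeps both $\mathbf{x}$ and $\boldsymbol\gamma$ as integration variables and runs a second-moment (Paley--Zygmund) argument directly on the product space. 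The key point is Cassels' change of variables $\boldsymbol\gamma'=\boldsymbol\gamma-q\mathbf{x}$ in each cross term $\int\!\!\int \alpha_q(q\mathbf{x}-\boldsymbol\gamma)\,\alpha_r(r\mathbf{x}-\boldsymbol\gamma)\,d\mathbf{x}\,d\boldsymbol\gamma$: this turns the integrand into $\alpha_q(-\boldsymbol\gamma')\,\alpha_r((r-q)\mathbf{x}-\boldsymbol\gamma')$, and for $r\neq q$ the $\mathbf{x}$-integration (via the periodicity lemma) decouples the two factors \emph{exactly}, giving $\mu(A_q)\mu(A_r)$ with no error term. The $\gcd$ difficulty simply never arises, no zero--one law is needed, and Paley--Zygmund plus Fubini on the $\mathbf{x}$-coordinate finishes. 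The trade-off is that the paper's argument only delivers full measure of the $\mathbf{x}$-slice for \emph{almost} every $\mathbf{x}$---which is precisely what the theorem asserts---whereas your route, once the averaging and zero--one law are written out, would go through the stronger slice-by-slice statement.
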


Approximating functions whose sum diverges will hereafter simply be referred to as \textit{divergent} and the set of all
divergent approximating functions will be denoted by $\mathcal{D}$.

\begin{defin}
	Fix a pair of reals $i$, $j$ satisfying (\ref{eqn:ijconditions}). Then, for each $\psi \in \mathcal{D}$ we define
	\begin{equation*}
		\textbf{V}_{(i, j)}(\psi):=\left\{ \text{irr. } \mathbf{x}: \mu \left(
		\textbf{W}_{(i, j)}^{\mathbf{x}}(\psi) \right)=1 \right\}.
	\end{equation*}
\end{defin}

Note that Theorem~\ref{thm:fullmeasure} is equivalent to the statement ``$\mu \left( \textbf{V}_{(i, j)}(\psi) \right)=1$ for each
$\psi\in \mathcal{D}$''.
In view of this theorem we ask whether there exist irrational
vectors $\mathbf{x}$ such that a set of full measure is obtained irregardless of the choice of divergent approximating function.
In other words,
we wish to characterise the set
\begin{equation*}
	\bigcap_{\psi \in \mathcal{D}} \textbf{V}_{(i, j)}(\psi).
\end{equation*}
It is certainly not obvious as to whether the intersection is non-empty.
Almost all activity in the past has been
centred on the specific $i=j=1/2$ case where elements of $\Bad\left(1/2,1/2 \right)$ are commonly
referred to as \textit{simultaneously badly approximable pairs}.
The most notable breakthrough was made by Kurzweil \cite{MR0073654}, who proved the following remarkable result.
\begin{thmkurz}
	\label{thm:kurzorig}
	\begin{equation*}
		\bigcap_{\psi \in \mathcal{D}} \textbf{V}_{\left( \frac{1}{2},\frac{1}{2} \right)}(\psi) =
		\textstyle{\Bad\left( \frac{1}{2},\frac{1}{2} \right)}.	
	\end{equation*}
\end{thmkurz}

In fact, Kurzweil's result was more general than the above (see \S\ref{sec:HigherDimensions} for further discussion)
but did not touch upon the weighted route with which we are interested.
His work has since been extended in various directions by Fayad \cite{MR2268368} (who gave a shorter proof of the above result from a dynamical systems viewpoint),
Tseng \cite{Tse08} and Chaika \cite{Cha}.

The work of Kim \cite{MR2335077} in a similar vein inspired activity concerning real vectors that are badly
approximable in the `twisted' inhomogeneous sense.

\begin{defin}
Fix an irrational vector $\mathbf{x}\in [0,1]^2$ and two real numbers $i$ and $j$ satisfying (\ref{eqn:ijconditions}). Define
$\Bad^{\mathbf{x}}(i, j)$ as the set of vectors $\boldsymbol\gamma \in [0,1]^2$ for
which there exists a constant $c(\boldsymbol\gamma)>0$ such that
\begin{equation*}
	\max \left\{\norm{qx_1-\gamma_1}^{1/i}, \norm{qx_2-\gamma_2}^{1/j} \right\} >
	\frac{c(\boldsymbol\gamma)}{\infabs{q}}	\quad \quad \quad \text{ for all }  \, q \in \mathbb{Z}_{\neq 0}.
\end{equation*}
\end{defin}

The set $\Bad^{\mathbf{x}}(i, j)$ represents the twisted inhomogeneous analogue of $\Bad(i, j)$
introduced in \S\ref{sec:Background1}. Previous work has again been confined to the $i=j=1/2$
setting. In particular, Bugeaud et al \cite{BHKV} proved the following result
(also see the work of Tseng \cite{Tse09} and Moshcheivitin \cite{Mos09} for more recent extensions).
\begin{thmBHKV}
	For any irrational $\mathbf{x} \in [0,1]^2$,
	\begin{equation*}
		\dim\left(\Bad^{\mathbf{x}}\left(\frac{1}{2},\frac{1}{2}\right) \right) \, = \, 2.
	\end{equation*}
\end{thmBHKV}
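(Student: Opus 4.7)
The plan is to show, for each sufficiently small constant $c > 0$, that the set
\begin{equation*}
    \Bad^{\mathbf{x}}_c \, := \, \left\{\boldsymbol\gamma \in [0,1]^2 \, : \, \max\left\{\norm{qx_1-\gamma_1}, \norm{qx_2-\gamma_2}\right\} > \sqrt{c/\abs{q}} \, \, \forall \, q \in \integer_{\neq 0} \right\}
\end{equation*}
contains a Cantor-like subset whose Hausdorff dimension can be forced arbitrarily close to $2$. Since $\Bad^{\mathbf{x}}_c \subseteq \Bad^{\mathbf{x}}(1/2,1/2)$ for every $c>0$, this is enough to establish the theorem.

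Fix a large integer $R$, set $r_n := R^{-n}$, and build a nested family $\mathcal{C}_0 \supseteq \mathcal{C}_1 \supseteq \cdots$ of collections of pairwise disjoint closed squares of side $r_n$ as follows. Start with $\mathcal{C}_0 := \{[0,1]^2\}$. Given $\mathcal{C}_{n-1}$, partition each $B \in \mathcal{C}_{n-1}$ into $R^2$ congruent sub-squares of side $r_n$ and discard any sub-square meeting a \emph{forbidden region}
\begin{equation*}
    F_q \, := \, \left\{\boldsymbol\gamma \in [0,1]^2 \, : \, \norm{qx_k - \gamma_k} \leq \sqrt{c/\abs{q}}, \, k = 1, 2\right\}
\end{equation*}
for some integer $q$ in the critical range $I_n := \{q \in \integer_{\neq 0} : r_n \leq \sqrt{c/\abs{q}} < r_{n-1}\}$; the survivors form $\mathcal{C}_n$. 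Then $K := \bigcap_n \bigcup_{B \in \mathcal{C}_n} B$ lies in $\Bad^{\mathbf{x}}_c$, since any nonzero $q$ lies in exactly one $I_n$ and any $\boldsymbol\gamma \in K$ occupies a sub-square that was kept at level $n$, hence is disjoint from $F_q$. The range $I_n$ contains $\asymp c R^{2n}$ integers, and each $F_q$ with $q \in I_n$ has side $\asymp r_n$ so intersects $O(1)$ sub-squares of $\mathcal{C}_n$. By equidistribution of $\{q\mathbf{x}\}$ modulo $1$, the expected number of $q \in I_n$ with $q\mathbf{x} \pmod 1$ lying in a given $B \in \mathcal{C}_{n-1}$ is $\asymp \abs{I_n} \cdot r_{n-1}^2 = cR^2$. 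Taking this as a valid bound, at most $O(cR^2)$ of the $R^2$ sub-squares in each $B$ are discarded, leaving $(1 - Cc)R^2$ survivors. A mass-distribution argument assigning equal mass to the squares at each level then gives
\begin{equation*}
    \dim K \, \geq \, \frac{\log \bigl( (1-Cc)R^2 \bigr)}{\log R} \, = \, 2 + \frac{\log(1-Cc)}{\log R},
\end{equation*}
which tends to $2$ as $R \to \infty$ (for any fixed small $c$).

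The principal obstacle is making the counting estimate hold \emph{uniformly} across all scales for every irrational $\mathbf{x}$. Equidistribution furnishes the correct asymptotic average, but an arbitrary irrational $\mathbf{x}$ may exhibit pathological discrepancy at particular scales: if $\mathbf{x}$ admits very good simultaneous rational approximations then, for the ranges of $q$ near the associated denominators, the points $\{q\mathbf{x}\}$ can clump heavily in small regions and the naive bound $O(cR^2)$ can fail. The standard remedy, in the spirit of the arguments of Pollington--Velani and the subsequent literature on badly approximable sets, is to abandon the rigid geometric progression $r_n = R^{-n}$ and instead choose the scales $r_n$ \emph{adaptively} as a sufficiently sparse sequence dictated by the denominators of the best simultaneous approximations to $\mathbf{x}$. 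Along such a sequence the discrepancy of $\{q\mathbf{x}\}$ over boxes of side $r_{n-1}$ is controllable, the counting bound is restored (possibly with a worse implicit constant), and the remaining Cantor-set construction and mass-distribution estimate proceed as above, still giving $\dim K \to 2$.
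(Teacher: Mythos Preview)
The paper does not itself prove Theorem~BHKV; it is quoted from \cite{BHKV} as background. The nearest result actually proved in the paper is Theorem~\ref{thm:dimbad}, which gives the analogous conclusion under the \emph{extra} hypothesis $\mathbf{x}\in\Bad(i,j)$, via the KTV framework of \cite{MR2231044}. There the bad-approximability of $\mathbf{x}$ is used in an essential way, to show that at most two orbit points $q\mathbf{x}\pmod 1$ with $k^m\le|q|<k^{m+1}$ can fall in any box $\theta F_m$; for an arbitrary irrational $\mathbf{x}$ that separation argument collapses, and the paper explicitly remarks that whatever device \cite{BHKV} uses in the symmetric $i=j=1/2$ case does not extend to general weights.

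Your sketch follows the same Cantor-set/mass-distribution philosophy and correctly isolates the counting step as the sole obstacle, but it does not close the gap. You first write ``taking this as a valid bound'' for a purely heuristic equidistribution estimate, and then propose as remedy to replace the geometric scales $r_n=R^{-n}$ by an adaptive sequence tied to best-approximation denominators of $\mathbf{x}$, asserting that ``the counting bound is restored''. That final assertion \emph{is} the whole theorem, and it is not substantiated. For a Liouville-type $\mathbf{x}$ the orbit $\{q\mathbf{x}\}$ can cluster arbitrarily badly over arbitrarily long ranges of $q$; there is no two-dimensional analogue of the three-distance theorem to fall back on, and you give no argument that some sparse sequence of scales makes the number of orbit points in a box of side $r_{n-1}$ uniformly $O(|I_n|\,r_{n-1}^2)$. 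The Pollington--Velani arguments you invoke treat the \emph{homogeneous} sets $\Bad(i,j)$, whose resonant objects are rationals with arithmetic structure independent of any fixed $\mathbf{x}$; they do not supply the inhomogeneous orbit-counting you need here. In short, the proposal identifies the difficulty accurately but does not overcome it, and as written it is not a proof of Theorem~BHKV.
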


Once more, the statement proved was more general than the above, which has been simplified for our needs.
At the time of writing there were no known results concerning the
Hausdorff dimension of $\Bad^{\mathbf{x}}(i, j)$ for a general pair $i$ and $j$.

\section{The main results}
\label{sec:statement-results}

\subsection{Statements of Results}
\label{sec:StatementsOfResults}

The following statement represents our main theorem and generalises Kurzweil's Theorem
%\ref{thm:kurzorig}
from the classical `$1/2$--$1/2$' statement to all `$(i, j)$--weightings'.
\begin{thm}
	\label{thm:kurz}
	For every pair of reals $i$ and $j$ satisfying (\ref{eqn:ijconditions}),
	\begin{equation*}
		\bigcap_{\psi \in \mathcal{D}} \textbf{V}_{(i,j)}(\psi) = \Bad(i, j).	 
	\end{equation*}
\end{thm}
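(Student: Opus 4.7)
The theorem is an equality of sets, so I prove the two inclusions separately. For any irrational $\mathbf{x}$ and any $\psi \in \mathcal{D}$ set
\[
A_q^{\mathbf{x}}(\psi) \; := \; \Bigl\{\boldsymbol\gamma \in [0,1]^2 : \max\{\norm{qx_1-\gamma_1}^{1/i}, \norm{qx_2-\gamma_2}^{1/j}\} \leq \psi(q)\Bigr\},
\]
a weighted rectangle of Lebesgue measure $4\psi(q)$ (read on the torus); then $\textbf{W}_{(i,j)}^{\mathbf{x}}(\psi)=\limsup_{q\to\infty} A_q^{\mathbf{x}}(\psi)$.

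\emph{The inclusion $\Bad(i,j) \subseteq \bigcap_{\psi \in \mathcal{D}}\textbf{V}_{(i,j)}(\psi)$.} Fix $\mathbf{x}\in\Bad(i,j)$ with constant $c(\mathbf{x})$ and any $\psi\in\mathcal{D}$. Divergence of $\sum_q \psi(q)$ makes $\sum_q \mu(A_q^{\mathbf{x}}(\psi))$ diverge, so the plan is to invoke Cassels' divergence form of the Borel--Cantelli lemma: provided
\[
\sum_{q_1,q_2\leq N} \mu\bigl(A_{q_1}^{\mathbf{x}}(\psi) \cap A_{q_2}^{\mathbf{x}}(\psi)\bigr) \; \ll \; \Bigl(\sum_{q\leq N} \mu(A_q^{\mathbf{x}}(\psi))\Bigr)^2
\]
for all sufficiently large $N$, we obtain $\mu(\limsup A_q^{\mathbf{x}}(\psi))>0$. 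A zero-one law of Gallagher type applicable to this limsup then upgrades positivity to full measure. The pivot is the pairwise estimate: by a change of variables, $A_{q_1}^{\mathbf{x}}(\psi)\cap A_{q_2}^{\mathbf{x}}(\psi)$ is non-empty only when
\[
\max\bigl\{\norm{(q_2-q_1)x_1}^{1/i},\norm{(q_2-q_1)x_2}^{1/j}\bigr\} \leq 2\psi(\min(q_1,q_2)),
\]
and the $\Bad(i,j)$-hypothesis forces $|q_2-q_1|\geq c(\mathbf{x})/\bigl(2\psi(\min(q_1,q_2))\bigr)$. A dyadic decomposition over scales of $|q_2-q_1|$, combined with the monotonicity of $\psi$, should yield the quasi-independence bound.

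\emph{The contrapositive of $\bigcap_{\psi \in \mathcal{D}}\textbf{V}_{(i,j)}(\psi) \subseteq \Bad(i,j)$.} Given $\mathbf{x}\notin\Bad(i,j)$, there is a sequence $q_1<q_2<\ldots$ of positive integers with $\max\{\norm{q_n x_1}^{1/i},\norm{q_n x_2}^{1/j}\} \leq \epsilon_n/q_n$ for a null sequence $\epsilon_n\to 0^{+}$. The strategy is to engineer a non-increasing $\psi\in\mathcal{D}$ block-by-block such that $\psi$ is just large enough on each block $[q_n,q_{n+1})$ to guarantee divergence, yet small enough that every rectangle $A_q^{\mathbf{x}}(\psi)$ with $q$ in that block is trapped in a small neighbourhood of an integer translate of the origin (via reduction by a suitable multiple of $q_n\mathbf{x}$, which is itself close to a lattice point). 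Because the accumulation points of $(q_n\mathbf{x})_n$ modulo one are concentrated near the lattice, the resulting $\textbf{W}_{(i,j)}^{\mathbf{x}}(\psi)$ is contained in a proper subset of the torus of Lebesgue measure strictly less than $1$, so $\mathbf{x}\notin \textbf{V}_{(i,j)}(\psi)$.

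\emph{Main obstacle.} The principal difficulty is the pairwise intersection estimate in the first inclusion: the weighted norm produces rectangles of sharply differing aspect ratios, so the dyadic scale-counting has to respect the asymmetry between the $i$-coordinate and the $j$-coordinate, and isolating the diagonal contribution $q_1=q_2$ from the off-diagonal terms demands an honest use of the monotonicity of $\psi$ together with a careful split of the parameter range into regions where one or other of the weighted norms dominates. The explicit construction of the bad $\psi$ in the converse is more combinatorial but relatively mechanical once the good homogeneous approximations $(q_n)$ are fixed.
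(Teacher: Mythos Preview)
Your strategy for the converse inclusion (given $\mathbf{x}\notin\Bad(i,j)$, build $\psi\in\mathcal{D}$ with $\mu(\textbf{W}^{\mathbf{x}}_{(i,j)}(\psi))<1$) is the paper's approach in Proposition~4.1; the paper in fact obtains $\mu=0$ by an explicit covering: for each block $n_{k-1}<|q|\leq n_k$ the rectangles $A^{\mathbf{x}}_q(\psi_0)$ are covered by $2|q_k|$ slightly larger rectangles centred at $\{q\mathbf{x}:|q|\leq|q_k|\}$ (using that every $q'$ in the block differs from some such $q$ by a multiple of $q_k$), and the total measure of these covers is summable. Your sketch is on the right track but would need to be completed along these lines.

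For the forward inclusion your plan departs from the paper's and contains a genuine gap. The quasi-independence route to $\mu>0$ is plausible---the $\Bad(i,j)$-separation of the centres $q\mathbf{x}$ does imply that the $q_2$ with $A^{\mathbf{x}}_{q_2}\cap A^{\mathbf{x}}_{q_1}\neq\emptyset$ are $\asymp c(\mathbf{x})/\psi(q_1)$-separated, and summing $\mu(A^{\mathbf{x}}_{q_1}\cap A^{\mathbf{x}}_{q_2})\leq 4\psi(q_2)$ over such $q_2$ recovers the Cassels bound---though the paper instead argues directly: it refines $\psi$ to a block-constant $\psi_2\leq a_*c(\mathbf{x})/(2r)$, assumes for contradiction that the limsup has small measure, and shows that at each level at least $k^{t+1}/2$ new rectangles are pairwise disjoint and disjoint from the past, forcing unbounded measure. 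The real gap is the upgrade from $\mu>0$ to $\mu=1$. There is no off-the-shelf Gallagher zero--one law for $\textbf{W}^{\mathbf{x}}_{(i,j)}(\psi)$: Gallagher's arguments rest on invariance under rational affine self-maps of the approximated variable, and here the variable is $\boldsymbol\gamma$, which enjoys no such symmetry (for instance $\boldsymbol\gamma\mapsto\boldsymbol\gamma+\mathbf{x}$ sends $\textbf{W}^{\mathbf{x}}(\psi)$ to the limsup for the shifted function $\psi(\cdot+1)$, not to itself). The paper's substitute crucially exploits the quantifier over \emph{all} of $\mathcal{D}$: having shown $\mu(\textbf{W}^{\mathbf{x}}_{(i,j)}(\psi'))>0$ for every $\psi'\in\mathcal{D}$, one constructs $\psi_4\in\mathcal{D}$ with $\psi_4(|q|)/\psi(|q+q'|)\to 0$ for each fixed $q'$, so that $\textbf{W}^{\mathbf{x}}_{(i,j)}(\psi_4)+q'\mathbf{x}\subseteq \textbf{W}^{\mathbf{x}}_{(i,j)}(\psi)$ for all $q'$; then Kurzweil's Lemma ($\mu(U)>0$ and $V$ dense imply $\mu(U\oplus V)=1$) with $V=\{q\mathbf{x}:q\in\mathbb{Z}_{\neq0}\}$ yields full measure. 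Your proposal needs this bootstrap, or a concrete replacement for it, before the argument closes.
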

In view of Khintchine's Theorem and statement (\ref{eqn:pv}), Theorem~\ref{thm:kurz} immediately
implies that the intersection on the LHS above is of $2$-dimensional Lebesgue measure zero and of full
Hausdorff dimension two.

%Although Theorem \ref{thm:kurz} represents the main result of this paper, the following secondary
%result is included simply because it is new and represents a small step towards the completion of the metrical theory of the
%`twisted' inhomogeneous setting.
Our next result makes a contribution towards determining the Hausdorff dimension of $\Bad^{\mathbf{x}}(i, j)$.
\begin{thm}
	\label{thm:dimbad}
	For any real $i$ and $j$ satisfying (\ref{eqn:ijconditions}) and any $\mathbf{x} \in \Bad(i, j)$,
	\begin{equation*}
			\dim \left(\Bad^{\mathbf{x}}(i, j) \right) \, = \,  2.
	\end{equation*}
\end{thm}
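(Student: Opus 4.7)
The plan is to reduce to showing, for each sufficiently small $c > 0$, that the bounded level set
\begin{equation*}
\Bad_c^{\mathbf{x}}(i,j) := \bigl\{ \boldsymbol\gamma \in [0,1]^2 : \max\bigl\{\norm{qx_1-\gamma_1}^{1/i}, \norm{qx_2-\gamma_2}^{1/j}\bigr\} > c/\abs{q}\ \text{for all}\ q \in \integer \setminus \{0\} \bigr\}
\end{equation*}
has Hausdorff dimension $2$; the theorem then follows from $\Bad^{\mathbf{x}}(i,j) = \bigcup_{c > 0} \Bad_c^{\mathbf{x}}(i,j)$ and the stability of $\hdim$ under countable unions. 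The degenerate cases $i = 0$ or $j = 0$ reduce, via a product decomposition, to the one-dimensional twisted result of Kim \cite{MR2335077}, so I focus on $i, j > 0$.

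To prove $\hdim \Bad_c^{\mathbf{x}}(i,j) = 2$, I would mimic the Cantor-type construction of Pollington and Velani \cite{PV} for the homogeneous set $\Bad(i,j)$, adapted to the inhomogeneous setting. Fix a large integer $R = R(c, \mathbf{x})$ and scales $r_n := R^{-n}$. Inductively build a nested family $\mathcal{F}_n$ of closed, axis-aligned \emph{weighted} rectangles of dimensions $r_n^{\,i} \times r_n^{\,j}$, each contained in a unique parent of $\mathcal{F}_{n-1}$, by first subdividing every parent into the natural grid of $R$ children of the next weighted scale and then discarding any child that meets a forbidden weighted rectangle
\begin{equation*}
B_q(c) := \bigl\{ \boldsymbol\gamma : \max\bigl\{\norm{qx_1-\gamma_1}^{1/i}, \norm{qx_2-\gamma_2}^{1/j}\bigr\} \leq c/\abs{q} \bigr\}
\end{equation*}
indexed by $\abs{q}$ in the dyadic-type window $[R^{n-1}, R^n)$. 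With this choice each $B_q(c)$ in play at stage $n$ has weighted scale comparable to $c \cdot r_n$, of the same order as a child rectangle, and the compact set $K_c := \bigcap_n \bigcup_{F \in \mathcal{F}_n} F$ lies entirely in $\Bad_c^{\mathbf{x}}(i,j)$ by design.

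The critical ingredient is a separation estimate driven by the hypothesis $\mathbf{x} \in \Bad(i,j)$. If $B_{q_1}(c)$ and $B_{q_2}(c)$, with $q_1 \neq q_2$ in the stage-$n$ window, shared a common point modulo $\integer^2$, the triangle inequality would give
\begin{equation*}
\max\bigl\{\norm{(q_1-q_2)x_1}^{1/i}, \norm{(q_1-q_2)x_2}^{1/j}\bigr\} \leq C\, c\cdot r_n,
\end{equation*}
while the $\Bad(i,j)$ property of $\mathbf{x}$ forces this quantity to exceed $c(\mathbf{x})/\abs{q_1-q_2} \geq c(\mathbf{x}) \cdot r_n$. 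Taking $c \ll c(\mathbf{x})$ thus rules out such overlaps, and a routine packing count bounds the number of forbidden rectangles meeting any given parent in $\mathcal{F}_{n-1}$ by an absolute constant $K = K(c, c(\mathbf{x}))$ independent of $n$. Since each $B_q(c)$ intersects only $O(1)$ children of its parent, at least $R - O(K)$ of the $R$ children survive at every stage.

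An anisotropic mass distribution argument applied to the Frostman-type measure placing mass $(R - O(K))^{-n}$ on each $F \in \mathcal{F}_n$ — split into three scale regimes according to whether a ball radius $\rho$ lies below $r_n^{\,j}$, between $r_n^{\,j}$ and $r_n^{\,i}$, or above $r_n^{\,i}$ — then yields $\hdim K_c \geq 2 - \varepsilon(R,c)$ with $\varepsilon \to 0$ as $R \to \infty$. The principal technical obstacle is precisely this anisotropic dimensional accounting: because the natural covers are weighted rectangles and not balls, the mass-distribution bound must be carried out regime by regime, with the logarithmic loss from the discarded children absorbed uniformly across regimes.
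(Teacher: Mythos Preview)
Your core idea—build a Cantor set by removing the forbidden weighted rectangles $B_q(c)$ and use the hypothesis $\mathbf{x}\in\Bad(i,j)$ to prevent these rectangles from clustering—is exactly the mechanism behind the paper's proof. The paper, however, does not carry out the nested construction or the anisotropic mass distribution by hand; it simply verifies the hypotheses of the Kristensen--Thorn--Velani framework (Theorem~KTV of \cite{MR2231044}), which packages both of these ingredients. So at the level of ideas the two proofs coincide; what differs is that you are proposing to unpack the black box.

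There is, however, a genuine quantitative gap in your packing step. Your separation argument correctly shows that the $B_{q}(c)$ with $q$ in the stage-$n$ window are pairwise disjoint once $c\ll c(\mathbf{x})$, but disjointness alone does \emph{not} yield an $O(1)$ bound on the number of forbidden centres meeting a fixed parent $F\in\mathcal{F}_{n-1}$. The parent has area $r_{n-1}$, while the weighted separation guaranteed by $\mathbf{x}\in\Bad(i,j)$ between any two centres $q_1\mathbf{x},q_2\mathbf{x}$ with $\abs{q_1},\abs{q_2}<R^{n}$ is only $s\asymp c(\mathbf{x})\,r_n$; the packing count is therefore
\[
K \;\asymp\; \frac{r_{n-1}}{s} \;\asymp\; \frac{R}{c(\mathbf{x})},
\]
which grows linearly in $R$. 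With $K\asymp R$ and only $R$ children per parent, the survival estimate ``$R-O(K)$'' is not positive, and certainly does not give a survival fraction tending to $1$ as $R\to\infty$. (Equidistribution of $\{q\mathbf{x}\}$ already forces $\sim 2R$ centres per parent on average, so no sharper bound is available.) The paper resolves precisely this issue via the scaling factor $\theta$ in the KTV framework: one works not with $F_m$ but with $\theta F_m$, where
\[
\theta \;=\; \tfrac12\min\Bigl\{\bigl(c(\mathbf{x})/2k\bigr)^{i},\,\bigl(c(\mathbf{x})/2k\bigr)^{j}\Bigr\},
\]
which shrinks each parent just enough that at most \emph{two} resonant points $\pm q\mathbf{x}$ from the relevant window can lie inside, while leaving the child-to-parent count ratio $\asymp k$ unchanged. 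Your direct construction needs an analogous rescaling of the rectangles in $\mathcal{F}_n$ (or, equivalently, a coupling $c\asymp c(\mathbf{x})/R$) before the survival and dimension estimates close.
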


The proof of this theorem makes use of a general framework developed by Kristensen, Thorn \& Velani
\cite{MR2231044}. This framework was
designed for establishing dimension results for large classes of badly approximable sets and the above statement
constitutes one further application. In all likelihood the above result is true without the assumption on $\mathbf{x}$. 
\begin{conj}
	\label{conj:harrap}
	For any real $i$ and $j$ satisfying (\ref{eqn:ijconditions}) and any irrational vector $\mathbf{x}\in [0,1]^2$,
	\begin{equation*}
		\dim\left( \Bad^{\mathbf{x}}(i, j) \right) \, = \, 2.
	\end{equation*}
\end{conj}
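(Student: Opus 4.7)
The plan is to construct, for any irrational $\mathbf{x}\in[0,1]^2$, a compact Cantor-type subset $\mathbf{K}\subseteq \Bad^{\mathbf{x}}(i,j)$ of Hausdorff dimension $2$, mimicking the construction used for Theorem~\ref{thm:dimbad} but removing the reliance on the hypothesis $\mathbf{x}\in\Bad(i,j)$. Fix a constant $c=c(\mathbf{x})>0$ (ultimately small) and a base scale $\rho\in(0,1)$. At the $n$-th stage one considers rectangles of dimensions $\rho^{ni}\times \rho^{nj}$, so that in the weighted metric $\max\{\norm{\cdot}^{1/i},\norm{\cdot}^{1/j}\}$ they are ``balls'' of radius $\rho^n$; one subdivides each surviving rectangle into children at the next scale; and one discards those children whose closure meets the $c/\infabs{q}$-weighted-neighbourhood of any translate $\{q\mathbf{x}\}$ with $\infabs{q}$ in the dyadic window $[\rho^{-n},\rho^{-(n+1)})$. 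The resulting limit set lies inside $\Bad^{\mathbf{x}}(i,j)$, and the task reduces to controlling the number of children discarded per parent.

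The crux is a uniform counting lemma: show that the number of integers $q$ in each window $[\rho^{-n},\rho^{-(n+1)})$ for which $\{q\mathbf{x}\}$ lies within weighted distance $O(\rho^n)$ of a fixed parent rectangle is bounded by an absolute constant independent of $n$. Under the hypothesis of Theorem~\ref{thm:dimbad} this is immediate from the defining inequality of $\Bad(i,j)$. Without that hypothesis the count can spike at scales corresponding to unusually good simultaneous approximants of $\mathbf{x}$. I would try to resolve this either by (a) working along a sparse subsequence of ``good'' scales where the count is provably bounded, exploiting the fact that consecutive best approximation denominators of an irrational vector grow geometrically apart, or by (b) allowing $c$ to depend on the scale, shrinking it at bad levels just enough to restore the uniform bound, and verifying via a mass-distribution argument in the spirit of \cite{MR2231044} that the resulting variable-rate construction still yields $\dim \mathbf{K}=2$.

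The main obstacle is precisely this clustering phenomenon: for irrational $\mathbf{x}$ lying near the boundary of $\Bad(i,j)$ in the weighted sense, many translates $\{q\mathbf{x}\}$ may accumulate near a single point at certain scales, and avoiding all their weighted neighbourhoods simultaneously while keeping enough children per parent to retain the full dimension is delicate. A promising alternative that would bypass the counting route entirely is to show that $\Bad^{\mathbf{x}}(i,j)$ is a winning set for a variant of Schmidt's game (or McMullen's absolute winning game) adapted to the rectangular geometry dictated by the weighting $(i,j)$: in the $i=j=1/2$ case this philosophy underlies Theorem~BHKV, and an analogous weighted game, in which Alice must avoid weighted rectangles rather than balls, would deliver $\dim(\Bad^{\mathbf{x}}(i,j))=2$ for every irrational $\mathbf{x}$ in one stroke.
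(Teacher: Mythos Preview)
The statement you are attempting to prove is a \emph{conjecture} in the paper, not a theorem: the paper does not prove it, and indeed the surrounding discussion makes clear that it is open. The author writes that ``the ideas of \cite{BHKV}, which also make use of the framework in \cite{MR2231044}, are not extendable to the full weighted setting of Conjecture~\ref{conj:harrap}; a new approach may be required,'' and in the subsequent remark notes that even the strengthening of Theorem~\ref{thm:dimbad} to a winning statement (obtained jointly with Moshchevitin) leaves the conjecture out of reach.

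Your proposal is not a proof but a programme, and you have correctly identified the obstruction yourself. The counting lemma you need --- that the number of $q$ in $[\rho^{-n},\rho^{-(n+1)})$ with $\{q\mathbf{x}\}$ landing in a fixed parent rectangle is bounded independently of $n$ --- is precisely what fails for general irrational $\mathbf{x}$ and is exactly what the hypothesis $\mathbf{x}\in\Bad(i,j)$ supplies in Theorem~\ref{thm:dimbad}. Your suggested remedies (a) and (b) are not arguments but hopes: for (a), there is no reason the ``good'' scales should be dense enough to recover full dimension, since best-approximation denominators for a general irrational vector can have arbitrarily large gaps; for (b), letting $c$ shrink at bad scales risks driving the effective constant to zero, which would leave you with a set not contained in $\Bad^{\mathbf{x}}(i,j)$ at all. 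Your final alternative --- a weighted Schmidt-type game --- is a reasonable line of attack, but note that the paper already reports progress in exactly this direction (the winning property for $\mathbf{x}\in\Bad(i,j)$) without resolving the conjecture; the difficulty of adapting the game to anisotropic rectangular geometry for \emph{arbitrary} irrational $\mathbf{x}$ is genuine and, at the time of the paper, unresolved.
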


It seems that the ideas of \cite{BHKV}, which also make
use of the framework in \cite{MR2231044}, are not extendable to the full weighted setting of Conjecture 
\ref{conj:harrap}; a new approach may be required.
Note that Theorem~\ref{thm:dimbad}, together with (\ref{eqn:pv}) trivially implies that the conjecture is true for a set of irrational vectors
$\mathbf{x}$ of full dimension.
\begin{xrem}
	Since submission, Nikolay Moshchevitin and the named auther have strengthened Theorem \ref{thm:dimbad} from 
	a statement implying full Hausdorff dimension to the statement that $\Bad^{\mathbf{x}}(i, j)$ is `winning' 
	under the given conditions. However, obtaining a solution to Conjecture \ref{conj:harrap} still remains out of reach.
\end{xrem}

\subsection{Higher Dimensions}
\label{sec:HigherDimensions}

We describe the $n$-dimensional generalisation of the sets
$\Bad(i, j)$ and $\textbf{V}_{(i,j)}(\psi)$
along with the higher dimensional analogue of the statements in \S\ref{sec:StatementsOfResults}.
Fix any $n$-tuple of reals $\mathbf{i}:=i_1, \ldots, i_n \geq 0$ such that $\sum_{j=1}^n i_j=1$. We naturally define
$\Bad(\mathbf{i})$ to be the set of vectors $\mathbf{x}:=(x_1, \ldots, x_n) \in [0,1]^n$
for which there exists a constant $c(\mathbf{x})>0$ such that
\begin{equation*}
	\max \left\{ \norm{qx_1}^{1/i_1}, \ldots, \norm{qx_n}^{1/i_n} \right\} > \frac{c(\mathbf{x})}{q} \quad \quad \quad
	\forall \, q \in \mathbb{N}.
\end{equation*}
For any approximating function $\psi$ and any irrational vector $\mathbf{x}\in [0,1]^n$, we denote by
$\textbf{W}_{\mathbf{i}}^{\mathbf{x}}(\psi)$ the set of vectors $\boldsymbol\gamma:=(\gamma_1, \ldots, \gamma_n)
\in [0, 1]^n$ such that
\begin{equation*}
	\max\left\{ \norm{qx_1-\gamma_1}^{1/i_1}, \ldots, \norm{qx_n-\gamma_n}^{1/i_n} \right\} \, \leq \,
	\psi(\infabs{q})
\end{equation*}
for infinitely many non-zero integers $q$.
Also, set
\begin{equation*}
	\textbf{V}_{\mathbf{i}}(\psi):=\left\{ \mathbf{x} \in [0,1]^n: \mu_n \left(
	\textbf{W}_{\mathbf{i}}^{\mathbf{x}}(\psi) \right)=1 \right\},
\end{equation*}
where $\mu_n$ denotes the standard $n$-dimensional Lebesgue measure, and once more denote by 
$\mathcal{D}$ the set of approximating functions for which
\begin{equation*}
	\displaystyle\sum_{r=1}^{\infty} \psi(r) \, = \, \infty.
\end{equation*}
The proof of Theorem~\ref{thm:kurz} can be extended in the obvious way, with no new ideas or difficulties, allowing us to establish the following statement.
For every real $n$-tuple $\mathbf{i}$ such that $i_1, \ldots, i_n \geq 0$ and $\sum_{j=1}^n i_j=1$,
\begin{equation}
	\label{eqn:nkurz}
	\bigcap_{\psi \in \mathcal{D}} \textbf{V}_{\mathbf{i}}(\psi) = \Bad(\mathbf{i}).	
\end{equation}
Khintchine's Theorem and statement (\ref{eqn:pv}) can also be generalised and yield that the above
intersection is of $n$-dimensional Lebesgue measure zero and of full Hausdorff dimension~$n$.
As eluded to above, Kurzweil proved in \cite{MR0073654} that equality (\ref{eqn:nkurz}) holds in the case that $i_1=\cdots=i_n=1/n$, for every natural number $n$.
This includes the one-dimensional formulation of the problem corresponding to the set $\Bad$.
However, in these generalisations the notation gets rather awkward and so for the sake of clarity (and relevance to the material in \S\ref{sec:introduction})
we will prove the `$n=2$' case only.

The set $\Bad^{\mathbf{x}}(\mathbf{i})$ can be defined in the obvious way and analogues to
Theorem~\ref{thm:dimbad} and Conjecture \ref{conj:harrap} can easily be established.
The framework and proof of Theorem~\ref{thm:dimbad} in \S\ref{sec:ProofOfTheoremRefThmDimbad} can easily be modified to establish the corresponding
result in higher dimensions.

\section{Multiplicative Diophantine Approximation}
\label{sec:multiplicative-diophantine-approximation}

This section comprises of a brief discussion of related problems in the area of multiplicative Diophantine approximation, where loosely speaking
the supremum norm is replaced by the geometric mean. For example, one could consider the set of vectors that are `well approximable' in a multiplicative sense.

\begin{defin}
	Let $\psi$ be any approximating function. Then, define
	\begin{equation*}
		\textbf{W}_{M}(\psi):= \left\{ \mathbf{x} \in [0,1]^2: \norm{qx_1} \norm{qx_2} \leq \psi(q) \, \text{ for inf.\ many } q \in \mathbb{N} \right\}.
	\end{equation*}
\end{defin}
\noindent
The relevant measure-theoretic result concerning $\textbf{W}_{M}(\psi)$ was found by Gallagher \cite{Gal} who proved a theorem implying the following.

\begin{thmgal}
	For any approximating function $\psi$,
	\begin{equation*}
		\mu \left( \textbf{W}_{M}(\psi) \right) \, = \,
		\begin{cases}
			0, & \displaystyle\sum_{r=1}^{\infty} \psi(r) \log (1/\psi(r))  \, < \, \infty. \\
			& \\
  		1, & \displaystyle\sum_{r=1}^{\infty} \psi(r) \log (1/\psi(r)) \, = \, \infty.
  	\end{cases}
	\end{equation*}
\end{thmgal}

It is natural to develop a twisted theory for the multiplicative setup.

\begin{defin}
	Fix any approximating function $\psi$ and any irrational vector $\mathbf{x}$ in $[0,1]^2$. Then, define
	\begin{equation*}
		\textbf{W}_{M}^{\mathbf{x}}(\psi):= \left\{ \mathbf{\gamma} \in [0,1]^2: \norm{qx_1-\gamma_1} \norm{qx_2-\gamma_1} \leq \psi(\infabs{q}) \, \text{ for inf.\ }
		q \in \mathbb{Z}_{\neq 0} \right\}.
	\end{equation*}
\end{defin}
\noindent
The following statement is a consequence of Theorem~\ref{thm:doublymetric} (see the Appendix).
\begin{thm}
	\label{thm:multmetrical}
	Fix any approximating function $\psi$. Then for $\mu$-almost all irrational vectors $\mathbf{x} \in [0,1]^2$ we have
	\begin{equation*}
		\mu \left( \textbf{W}_{M}^{\mathbf{x}}(\psi) \right) \, = \,
		\begin{cases}
			0, & \, \displaystyle\sum_{r=1}^{\infty} \psi(r) \log (1/\psi(r))  \, < \, \infty. \\
			& \\
  		1, & \, \displaystyle\sum_{r=1}^{\infty} \psi(r) \log (1/\psi(r)) \, = \, \infty.
  	\end{cases}
	\end{equation*}
\end{thm}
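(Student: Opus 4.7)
The plan is to split the statement into its convergence and divergence halves. For the convergence half one uses direct Borel-Cantelli, while the divergence half is reduced to the doubly metric statement Theorem~\ref{thm:doublymetric} via a Fubini argument; this is consistent with the sentence preceding the theorem which identifies it as a consequence of the appendix result.

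For the convergence case, fix any irrational $\mathbf{x}$ and let
\begin{equation*}
A_q(\mathbf{x}) := \left\{ \boldsymbol\gamma \in [0,1]^2 : \norm{qx_1-\gamma_1}\,\norm{qx_2-\gamma_2} \leq \psi(q) \right\}.
\end{equation*}
Since $\boldsymbol\gamma \mapsto q\mathbf{x}-\boldsymbol\gamma \pmod 1$ is a measure-preserving bijection of $[0,1]^2$, the measure $\mu(A_q(\mathbf{x}))$ equals the Lebesgue measure of the hyperbolic region $\{(y_1,y_2) \in [-\tfrac12,\tfrac12]^2 : |y_1||y_2| \leq \psi(q)\}$, a standard computation for which yields a quantity comparable to $\psi(q)\log(1/\psi(q))$. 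Hence if $\sum_r \psi(r)\log(1/\psi(r)) < \infty$, Borel-Cantelli gives $\mu(\textbf{W}_M^{\mathbf{x}}(\psi)) = \mu(\limsup_q A_q(\mathbf{x})) = 0$, and crucially this holds for every irrational $\mathbf{x}$, not just almost all.

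For the divergence case, I would invoke Theorem~\ref{thm:doublymetric} to conclude that the set
\begin{equation*}
\mathcal{E} := \left\{ (\mathbf{x},\boldsymbol\gamma) \in [0,1]^2\times[0,1]^2 : \norm{qx_1-\gamma_1}\norm{qx_2-\gamma_2} \leq \psi(\infabs{q}) \text{ for i.m. } q \in \mathbb{Z}_{\neq 0}\right\}
\end{equation*}
has full four-dimensional Lebesgue measure whenever $\sum_r \psi(r)\log(1/\psi(r)) = \infty$. Applying Fubini's theorem to the indicator function $\indicator{\mathcal{E}}$ then yields that for $\mu$-almost every $\mathbf{x} \in [0,1]^2$ the slice $\{\boldsymbol\gamma : (\mathbf{x},\boldsymbol\gamma)\in\mathcal{E}\} = \textbf{W}_M^{\mathbf{x}}(\psi)$ has full two-dimensional Lebesgue measure. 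Discarding the $\mu$-null set of rational vectors completes the proof.

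The only subtlety lies in verifying that Theorem~\ref{thm:doublymetric} is phrased in sufficient generality to cover the multiplicative target $\norm{qx_1-\gamma_1}\norm{qx_2-\gamma_2}\leq\psi(q)$ rather than the supremum-norm target appearing in $\textbf{W}_{(i,j)}^{\mathbf{x}}(\psi)$. I expect this to be the main point requiring attention: the appendix theorem should be formulated so that the relevant ``shape'' is a general measurable subset of the torus with prescribed measure, in which case the hyperbolic crosses of measure $\asymp \psi(q)\log(1/\psi(q))$ fit in directly, and the divergence condition on $\sum \psi(r)\log(1/\psi(r))$ is precisely the hypothesis needed to push the doubly metric mechanism through. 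Once that generality is confirmed, the Fubini slicing is routine and the theorem follows.
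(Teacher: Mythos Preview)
Your proposal is correct and matches the paper's approach: the paper does not give a separate proof of this theorem but simply declares it a consequence of the appendix result, Theorem~\ref{thm:doublymetric}, which is indeed stated for an \emph{arbitrary} sequence of measurable targets $A_q\subset[0,1)^d$ and therefore accommodates the hyperbolic regions $\{\mathbf y:\|y_1\|\|y_2\|\le\psi(q)\}$ of measure $\asymp\psi(q)\log(1/\psi(q))$; the Fubini slicing you describe is exactly the intended deduction. Your direct Borel--Cantelli for the convergence half actually gives a slightly sharper conclusion (valid for \emph{every} irrational $\mathbf x$, as the paper also remarks in the $(i,j)$ setting just before Theorem~\ref{thm:fullmeasure}), but this is a minor refinement of the same argument rather than a different route.
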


Once more one could ask whether there exist irrational vectors $\mathbf{x}$ such that a set of full measure is obtained irrespective of the
choice of approximating function. Accordingly, let $\mathcal{D}_M$ denote the set of approximating functions for which $\sum_{r=1}^{\infty} \psi(r) \log (1/\psi(r))$ diverges and
define
\begin{equation*}
	\textbf{V}_{M}(\psi):=\left\{ \text{irr. } \mathbf{x}: \mu \left(
	\textbf{W}_{M}^{\mathbf{x}}(\psi) \right)=1 \right\}.
\end{equation*}
%\end{defin*}
Consider the intersection
\begin{equation}
	\label{eqn:mintersect}
	\bigcap_{\psi \in \mathcal{D}_M} \textbf{V}_{M}(\psi).
\end{equation}
\noindent
In view of Theorem~\ref{thm:kurz}, one might expect that (\ref{eqn:mintersect}) is equivalent to the multiplicative analogue of the set of badly approximable pairs.
However, quite how such an analogue should be defined is up for debate.

One could argue
that a valid choice for a set of \textit{multiplicatively badly approximable numbers} might be
\begin{equation*}
	\Bad_L:=\left\{\mathbf{x} \in [0,1]^2: \, \exists \, c(\mathbf{x})>0 \text{ s.t.}
	\norm{qx_1}\norm{qx_2} > \frac{c(\mathbf{x})}{q} \quad \forall \, q \in \mathbb{N} \right\}.
\end{equation*}
The famous Littlewood conjecture states that the set $\Bad_L$ is empty.
For recent developments and background concerning the Littlewood conjecture see \cite{EKL}, \cite{PV2} and the references therein.

Another candidate for the multiplicatively badly approximable numbers is the larger set
\begin{equation*}
	\text{\textup{\bf Mad}}:=\left\{\mathbf{x} \in [0,1]^2: \, \exists \, c(\mathbf{x})>0 \text{ s.t.}
	\norm{qx_1}\norm{qx_2} > \frac{c(\mathbf{x})}{q \log{q}} \quad \forall \, q \in \mathbb{N} \right\},
\end{equation*}
recently introduced in \cite{mad}. Hence, the following question arises:
\begin{equation*}
Can   \ 	\bigcap_{\psi \in \mathcal{D}_M} \textbf{V}_{M}(\psi) \ \text{{\em be characterized as}}  \  \Bad_L \, \text{ \em or } \, \text{\textup{\bf Mad}}  ?
\end{equation*}
Even establishing that $\Bad_L \subseteq \bigcap_{\psi \in \mathcal{D}_M} \textbf{V}_{M}(\psi)$ seems non-trivial.

\section{Proof of Theorem~\ref{thm:kurz}}
\label{sec:ProofsOfTheorems}

\subsection{Proof of Theorem~\ref{thm:kurz} (Part $1$)}
\label{sec:proof-of-theorem-kurz-part-1}

If either $i=0$ or $j=0$
the theorem simplifies to a one-dimensional `$n=1$' version of Kurzweil's Theorem
corresponding to $\Bad$. Therefore, we can and will assume hereafter that $i, j > 0$.
The proof of Theorem~\ref{thm:kurz} takes the form of two inclusion propositions, the first of
which is proved in this section.
\begin{prop}
	\label{prop:kurz1}
	For every real $i,j > 0$ such that $i+j=1$,
	\begin{equation*}
		\label{eqn:kurz1}
		\bigcap_{\psi \in \mathcal{D}} \textbf{V}_{(i,j)}(\psi) \subseteq \Bad(i, j).	
	\end{equation*}
\end{prop}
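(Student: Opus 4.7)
The plan is to prove the contrapositive: assume $\mathbf{x} \notin \Bad(i,j)$ and construct a single approximating function $\psi \in \mathcal{D}$ such that $\mu\bigl(\textbf{W}^{\mathbf{x}}_{(i,j)}(\psi)\bigr) < 1$, so that $\mathbf{x} \notin \textbf{V}_{(i,j)}(\psi)$ and hence $\mathbf{x}$ fails to lie in the intersection. Observe that $\textbf{W}^{\mathbf{x}}_{(i,j)}(\psi)$ is a $\limsup$ of rectangles $R_q$ of area $4\psi(q)$ centered at $\{q\mathbf{x}\} \pmod 1$, with side lengths $2\psi(q)^i$ and $2\psi(q)^j$; the goal is to engineer $\psi$ so that although $\sum_q \mu(R_q) = \infty$, the $\limsup$ still has measure strictly less than one.

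First I would unpack the hypothesis. Failure of $(i,j)$-bad approximability produces a sparse sequence $q_n \to \infty$ and constants $\epsilon_n \downarrow 0$ with
\begin{equation*}
\max\left\{\, q_n\,\norm{q_n x_1}^{1/i},\, q_n\,\norm{q_n x_2}^{1/j}\,\right\} \,\leq\, \epsilon_n.
\end{equation*}
After passing to a subsequence I may assume $q_{n+1}/q_n \to \infty$ as fast as needed and that $\epsilon_n$ decays geometrically. The key consequence is that the orbit point $\{q_n \mathbf{x}\}$ sits inside an extremely thin weighted rectangle around the origin of the torus, so that for any $q$ in the block $(q_{n-1},q_n]$ the identity
\begin{equation*}
\{q\mathbf{x}\} \,\equiv\, \{(q-q_{n-1})\mathbf{x}\} + \{q_{n-1}\mathbf{x}\} \pmod{1}
\end{equation*}
forces the orbit in that block to be a vanishingly small perturbation of a shifted initial segment based at the origin.

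Next I would build $\psi$ as a non-increasing step function that is constant on each block $(q_{n-1},q_n]$, taking value $\psi_n$ of order $c/q_n$ for a small absolute constant $c > 0$. This choice guarantees monotonicity (since $\psi_n \leq \psi_{n-1}$ once $q_n > q_{n-1}$) and divergence, because
\begin{equation*}
\sum_{q=1}^\infty \psi(q) \,=\, \sum_n \psi_n\,(q_n-q_{n-1}) \,\asymp\, \sum_n c\,(1-q_{n-1}/q_n) \,=\, \infty
\end{equation*}
provided $q_{n-1}/q_n \to 0$. Each rectangle $R_q$ in the $n$th block has area $\asymp c/q_n$, so the total area contributed by that block is only $\asymp c$, keeping any single block's coverage strictly below one. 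The heart of the argument is then to show that, by the clustering above, the coverings contributed by different blocks are essentially the same subset of $[0,1]^2$ up to small translations, so that their union over $n$ remains strictly below one in measure. Consequently a fixed set of positive measure avoids $\limsup R_q$.

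The principal obstacle is the quantitative execution of this concentration step. One must balance three competing parameters: the block length $q_n - q_{n-1}$, the ``reset error'' $\{q_{n-1}\mathbf{x}\}$ (controlled by $\epsilon_{n-1}$), and the weighted side lengths $\psi_n^i$ and $\psi_n^j$ of the rectangles, which scale asymmetrically in the two coordinates. This weighted asymmetry is the essential new feature compared with Kurzweil's symmetric $i=j=1/2$ argument, and it forces a careful calibration of the growth rate of $q_n$ against $1/\epsilon_n$ so that the shifted-orbit approximation remains valid throughout each block and the error in each coordinate can be absorbed into the corresponding side of $R_q$ simultaneously.
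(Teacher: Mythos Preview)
Your overall strategy---argue by contrapositive, extract a sequence $(q_n,\epsilon_n)$ from the failure of $(i,j)$-badness, build a divergent step-function $\psi$, and exploit the near-periodicity of the orbit coming from $\{q_n\mathbf{x}\}\approx 0$---is exactly the paper's. The difficulty is that the mechanism you describe for the crucial ``concentration step'' is not the one that works, and as stated it would not close.

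First, the identity $\{q\mathbf{x}\}\equiv\{(q-q_{n-1})\mathbf{x}\}+\{q_{n-1}\mathbf{x}\}$ with a \emph{single} subtraction does not cluster anything: for $q$ near $q_n$ the residue $q-q_{n-1}$ is still of order $q_n$, so the ``shifted initial segment'' has essentially as many centres as the original block. What is needed is subtraction of \emph{multiples} $m q_k$ so that every $q$ in the block collapses to some $|q-mq_k|\le|q_k|$; the accumulated translation error is then $\lesssim (n_k/|q_k|)\cdot(c_k/|q_k|)^i$ in the first coordinate, and similarly in the second. Second, the assertion that ``coverings contributed by different blocks are essentially the same subset up to small translations'' is not correct: block $n$ has $\asymp q_n$ centres and rectangles of side $(c/q_n)^i\times(c/q_n)^j$, while block $n+1$ has $\asymp q_{n+1}$ centres and strictly smaller rectangles, so there is no reason for the two unions to coincide. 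The paper does \emph{not} try to show the blocks cover a common set; instead it covers each block's union by $2|q_k|$ enlarged rectangles and shows the resulting measures are \emph{summable} in $k$, then invokes Borel--Cantelli to obtain $\mu\bigl(\textbf{W}^{\mathbf{x}}_{(i,j)}(\psi_0)\bigr)=0$ (not merely $<1$). This forces a different calibration than yours: the block endpoints are $n_k=|q_k|\,c_k^{-1/3}$ rather than $q_k$, and the step value is $c_k^{1/3}/|q_k|$ rather than $c/q_k$. The extra factor $c_k^{1/3}\to 0$ is precisely what makes the block-cover measures summable while keeping $\sum\psi_0$ divergent, and it is what absorbs the weighted asymmetry you correctly flag as the main issue.
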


\begin{proof}
	
	We will show that if $\mathbf{x} \notin \Bad(i, j)$ then $\mathbf{x}
	\notin \bigcap_{\psi \in \mathcal{D}} \textbf{V}_{(i,j)}(\psi)$
	and prove the result via a contrapositive argument.
	In particular, we will show that for every such $\mathbf{x}$ there exists an approximating
	function $\psi_0 \in \mathcal{D}$ for which
	\begin{equation}
		\label{eqn:lemma1plan}
		\mu \left( \textbf{W}_{(i, j)}^{\mathbf{x}}(\psi_0) \right) =0;
	\end{equation}
	i.e., the points $\boldsymbol\gamma:=(\gamma_1, \gamma_2) \in [0,1]^2$ that
	satisfy the inequality
	\begin{equation*}
		\max\left\{ \norm{qx_1-\gamma_1}^{1/i}, \norm{qx_2-\gamma_2}^{1/j} \right\} \, \leq \, \psi_0(\infabs{q})
	\end{equation*}
	for infinitely many non-zero integers $q$ form a null set with respect to the Lebesgue measure.
	
	First, if $\mathbf{x} \notin \Bad(i, j)$ then by definition there exists a sequence
	$\left\{q_k \right\}_{k \in \mathbb{N}}$
	of non-zero integers such that
	\begin{equation}
		\label{eqn:badsequence}
		\max\left\{ \norm{q_k x_1}^{1/i}, \norm{q_k x_2}^{1/j} \right\} <
		\frac{c_k}{\infabs{q_k}}, \quad \quad \quad  \infabs{q_k}< \infabs{q_{k+1}} \text{  }
		\, \forall \text{  } k \in \mathbb{N},
	\end{equation}
	where $c_k>0$ and $c_k\rightarrow 0$ as $k\rightarrow \infty$.
	Furthermore, it can be assumed that
	\begin{equation}
		\label{eqn:ckcondition}
		1 > c_k >  2^{3/(2\min\left\{i, j \right\})}  c_{k+1} \quad \text{  } \forall \text{  } k \in \mathbb{N}.
	\end{equation}
	If this were not the case then we could simply choose a suitable subsequence of $\left\{q_k \right\}$. In addition, it
	may also be assumed that the sequence $\left\{\left( c_k \right)^{-1/3} \right\}_{k \in \mathbb{N}}$ takes integer values for every index $k$.
	Note that the latter assumption, along with condition (\ref{eqn:ckcondition}),	guarantees that for every $k$
	\begin{equation}
		\label{eqn:ckproperty}
		\left( c_k \right)^{-\frac{1}{3}} \geq 2.
	\end{equation}
	
	%We wish to utilise the sequences $\left\{q_k \right\}$ and $\left\{c_k \right\}$ to define
	We wish to construct a divergent approximating function $\psi_0$
	for which equation (\ref{eqn:lemma1plan}) is fulfilled. To that end, we introduce some useful notation.
	For each $k \geq 1$, let $n_k:=\infabs{q_k}(c_k)^{-1/3}$. In
	view of the above assumptions the sequence $\left\{n_k\right\}_{k \in \mathbb{N}}$ is increasing and
	takes strictly positive integer values for each index $k$.
	That said, we set $n_0:=0$ for future conciseness. Next, for each natural number $r$ define
	\begin{equation*}
		\psi_0(r): =
		\begin{cases}
			1, & \quad r \leq n_1. \\
  		\infabs{q_{k+1}}^{-1} \left( c_{k+1} \right)^{\frac{1}{3}}, & \quad
  		n_{k} < r \leq n_{k+1} \quad \quad \text{ for every } k \geq 1.
  	\end{cases}
	\end{equation*}
	It is obvious that $\psi_0$ is a decreasing and strictly positive function. To show $\psi_0 \in \mathcal{D}$, note that
	\begin{eqnarray*}
		\sum_{r=1}^{\infty} \psi_0(r)  & > & \sum_{k=1}^{\infty} \sum_{r=n_k+1}^{n_{k+1}}
		\psi_0(r)
		\\	& = & \sum_{k=1}^{\infty} \left( n_{k+1} - (n_k+1)+1 \right) \psi_0(n_{k+1}) \\ &
		= & \sum_{k=1}^{\infty} \left(\infabs{q_{k+1}} \left( c_{k+1} \right)^{-\frac{1}{3}} -
		 \infabs{q_{k}} \left( c_{k} \right)^{-\frac{1}{3}} \right) 
		\infabs{q_{k+1}}^{-1} \left( c_{k+1} \right)^{\frac{1}{3}}  \\
		& = & \sum_{k=1}^{\infty} \left( 1 - \frac{\infabs{q_{k}}}{\infabs{q_{k+1}}} \left( \frac{c_{k+1}}{c_{k}}
		\right) ^{\frac{1}{3}} \right) \\
		& > & \sum_{k=1}^{\infty} \left( 1 - \left( \frac{c_{k+1}}{c_{k}}	 \right) ^{\frac{1}{3}} \right) \quad \quad
		\quad \text{ (since $\infabs{q_{k}} < \infabs{q_{k+1}}$ )} \\
		& \stackrel{(\ref{eqn:ckcondition})}{>} & \sum_{k=1}^{\infty} \left( 1 - 2^{-1/(2 \min\left\{i, j \right\})} \right) \\
		& \geq & \sum_{k=1}^{\infty} \text{ } \frac{1}{2}  \quad = \quad \infty, \
	\end{eqnarray*}
	as required.
	
	Finally, we endeavour to show (\ref{eqn:lemma1plan}) holds for our choice of divergent
	function.	To that end, for each non-zero integer $q$ let
	\begin{equation*}
		\textbf{R}_{\psi_o}(q):= \left\{ \boldsymbol\gamma \in [0, 1]^2:
		\max\left\{ \norm{qx_1-\gamma_1}^{1/i}, \norm{qx_2-\gamma_2}^{1/j} \right\} \leq \psi_0(\infabs{q}) \right\}
	\end{equation*}
	denote the closed rectangular region in the plane centred at the point $q\mathbf{x}$ (mod~$1$) of sidelengths $2\psi_0^{i}(\infabs{q})$ and
	$2\psi_0^{j}(\infabs{q})$ respectively.
	When using the notation `$\textbf{R}_{\psi_o}(q)$' it will be understood that $i$, $j$ and $\mathbf{x}$ are fixed. In addition, all such closed rectangular regions
	will be referred to throughout as simply a `rectangle' and all points within any such rectangle will tacitly be modulo one.
	It follows that
	\begin{eqnarray}
		\nonumber
		\textbf{W}_{(i, j)}^{\mathbf{x}}(\psi_0)& = & \left\{ \boldsymbol\gamma \in [0,1]^2: \boldsymbol\gamma \in
		\textbf{R}_{\psi_o}(q) \, \text{ for inf.\ many } q \in \mathbb{Z}_{\neq0} \right\} \\
	 \label{eqn:newchar} & = & \left\{ \right. \boldsymbol\gamma \in [0,1]^2: 
	 \boldsymbol\gamma \in \bigcup_{\infabs{q} = n_{k-1} + 1}^{n_k}
		\textbf{R}_{\psi_o}(q) \, \text{ for inf.\ many } k \in \mathbb{N} \left. \right\}.\
	\end{eqnarray}
	In view of the Borel-Cantelli lemma, to show that equation (\ref{eqn:lemma1plan}) 
	holds it is enough to show that
	\begin{equation}
		\label{eqn:newaim}
		\displaystyle\sum_{k=1}^{\infty} \mu \left( \displaystyle\bigcup_{\infabs{q} = n_{k-1} + 1}^{n_k}
		\textbf{R}_{\psi_o}(q) \right) \, < \, \infty.
	\end{equation}
	We will estimate the LHS by estimating the measure of each union of rectangles of the form
	\begin{equation*}
		\textbf{R}_{\psi_o}^{\ast}(k):\, =\, \displaystyle\bigcup_{\infabs{q} = n_{k-1} + 1}^{n_k} \textbf{R}_{\psi_o}(q),
		\quad \quad \text{ for } k \in \mathbb{N}.
	\end{equation*}
	We will hereafter refer to any union of rectangles as a `collection'.
	For each $k$, the collection $\textbf{R}_{\psi_o}^{\ast}(k)$ consists of $2(n_k-n_{k-1})$ rectangles
	in $[0,1]^2$ each centred at some point $q\mathbf{x}$ for which $n_{k-1} < \infabs{q} \leq n_k$.
	By definition, every rectangle in a collection is of the same measure, in particular each has respective
	sidelengths $2\psi_0^{i}(n_k)$ and $2\psi_0^{j}(n_k)$.	
	
	To estimate the measure of $\textbf{R}_{\psi_o}^{\ast}(k)$ we will cover it with a collection of
	larger rectangles whose measure will in some sense increase at a more controllable rate than those of $\textbf{R}_{\psi_o}^{\ast}(k)$.
	This will allow us to calculate a finite upper bound for the sum (\ref{eqn:newaim}) as required.
	With these aims in mind, for each index $k$ set
		\begin{eqnarray*}
		\textbf{S}_{\psi_o}^{\ast}(k): &  = & \displaystyle\bigcup_{\infabs{q}=1}^{\infabs{q_k}} 
		\left\{ \boldsymbol\gamma	\in [0, 1]^2:
		\norm{qx_1-\gamma_1} \leq \frac{n_k}{\infabs{q_k}} \left(\frac{c_k}{\infabs{q_k}} 
		\right)^i + \psi_0^{i}(n_k)\right. \\ 
		& & \, \left. \, \quad \quad  \quad \quad  \quad \quad \text{ and } \, \displaystyle
		\norm{qx_2-\gamma_2} \leq \frac{n_k}{\infabs{q_k}} \left(\frac{c_k}{\infabs{q_k}} 
		\right)^j + \psi_0^{j}(n_k)	\right\}. \
	\end{eqnarray*}
	Each collection $\textbf{S}_{\psi_o}^{\ast}(k)$ now consists of $2\infabs{q_k}$ rectangles in $[0,1]^2$, one
	centred at each point $q\mathbf{x}$ with $1 \leq \infabs{q}<\infabs{q_k}$.
	The sidelengths of each of these
	rectangles are
	\begin{equation*}
		2\left(\frac{n_k}{\infabs{q_k}} \left(\frac{c_k}{\infabs{q_k}} \right)^i + \psi_0^{i}(n_k)\right) \quad
		\text{	and } \quad 2\left(\frac{n_k}{\infabs{q_k}} \left(\frac{c_k}{\infabs{q_k}} \right)^j 
		+ \psi_0^{j}(n_k)\right)
	\end{equation*}
	respectively. An upper bound for the Lebesgue measure of	 $\textbf{S}_{\psi_o}^{\ast}(k)$
	can be easily deduced. We have
	\begin{equation}
		\label{eqn:Hkbound}
		\mu\left( \textbf{S}_{\psi_o}^{\ast}(k) \right) \leq 2^3 \infabs{q_k} \left( \frac{n_k}{\infabs{q_k}} \left(
		\frac{c_k}{\infabs{q_k}} \right)^i + \psi_0^{i}(n_k)\right) \left( \frac{n_k}{\infabs{q_k}} \left(
		\frac{c_k}{\infabs{q_k}} \right)^j + \psi_0^{j}(n_k)\right)
	\end{equation}
	for every index $k\geq1$.	
	
	We wish to show that $\textbf{S}_{\psi_o}^{\ast}(k)$ covers $\textbf{R}_{\psi_o}^{\ast}(k)$
	for each $k$. As the rectangles
	of $\textbf{S}_{\psi_o}^{\ast}(k)$ are larger than those of $\textbf{R}_{\psi_o}^{\ast}(k)$,
	any rectangle of $\textbf{R}_{\psi_o}^{\ast}(k)$
	centred at a point $q'\mathbf{x}$ with $n_{k-1} < \infabs{q'} \leq \infabs{q_k}$ will automatically be
	contained in the corresponding rectangle of $\textbf{S}_{\psi_o}^{\ast}(k)$. Hence, it will suffice to check that any
	rectangle of $\textbf{R}_{\psi_o}^{\ast}(k)$ centred at a point $q'\mathbf{x}$ with
	$\infabs{q_k} < \infabs{q'}
	\leq n_k$ is covered by some rectangle of $\textbf{S}_{\psi_o}^{\ast}(k)$. It is clear by construction and
  inequality	(\ref{eqn:ckproperty}) that $\infabs{q_k} < n_k$ and so rectangles of this type are present in every
  $\textbf{R}_{\psi_o}^{\ast}(k)$. For each of these integers $q'$ we can find a natural number
	$m$ such that	$\infabs{q'-m q_k} \leq \infabs{q_k}$. This implies there must be a rectangles of the collection
	$\textbf{S}_{\psi_o}^{\ast}(k)$ that is centred at the point
	$(q'-m q_k) \mathbf{x}$.
	It is also clear that $m$ can always be chosen in a way such that $\infabs{m q_k} < \infabs{q'}$.
	It follows that
	\begin{equation}
		\label{eqn:mbound}
		\infabs{m} < \frac{\infabs{q'}}{\infabs{q_k}} \leq \frac{n_k}{\infabs{q_k}}.
	\end{equation}
	Now, consider the distance between the points $q' \mathbf{x}$ and	$(q'-m q_k) \mathbf{x}$.
	We have
	\begin{eqnarray*}
		\norm{q' x_1-(q'-m q_k)x_1} & = \norm{-m q_k x_1} & \leq \infabs{m}\norm{q_k x_1} \\
		& & \stackrel{(\ref{eqn:badsequence})}{<} \infabs{m} \left( \frac{c_k}{\infabs{q_k}} \right)^i \\
		& & \stackrel{(\ref{eqn:mbound})}{<} \frac{n_k}{\infabs{q_k}} \left( \frac{c_k}{\infabs{q_k}} \right)^i, \
	\end{eqnarray*}
	and similarly
	\begin{equation*}
		\norm{q' x_2-(q'-m q_k)x_2} < \frac{n_k}{\infabs{q_k}} \left( \frac{c_k}{\infabs{q_k}} \right)^j.	
	\end{equation*}
	Combining the two above inequalities yields that any rectangle of $\textbf{R}_{\psi_o}^{\ast}(k)$ centred at a
	point $q'\mathbf{x}$ with $\infabs{q_k} < \infabs{q'} \leq n_k$ is covered by the
	rectangle of $\textbf{S}_{\psi_o}^{\ast}(k)$ centred at $(q'-m q_k) \mathbf{x}$.
	This shows that
	$\textbf{S}_{\psi_o}^{\ast}(k)$ is a cover for $\textbf{R}_{\psi_o}^{\ast}(k)$ and so
	\begin{equation*}
		\sum_{k=1}^{\infty} \mu \left( \textbf{R}_{\psi_o}^{\ast}(k) \right) \quad \leq \quad
		\sum_{k=1}^{\infty} \mu \left( \textbf{S}_{\psi_o}^{\ast}(k) \right).
	\end{equation*}
	Estimate (\ref{eqn:Hkbound}) yeilds that the RHS is bounded above by	
	\begin{eqnarray*}
	  & & \displaystyle\sum_{k=1}^{\infty} 8 \infabs{q_k}
		\left( \frac{n_k}{\infabs{q_k}} \left(
		\frac{c_k}{\infabs{q_k}} \right)^i + \psi_0^{i}(n_k)\right) \left( \frac{n_k}{\infabs{q_k}} \left(
		\frac{c_k}{\infabs{q_k}} \right)^j + \psi_0^{j}(n_k)\right) \\
		& = &	\displaystyle\sum_{k=1}^{\infty} 8 \infabs{q_{k}}
		\left( \left( c_{k} \right)^{-\frac{1}{3}} \left( c_{k} \right)^{i}\infabs{q_{k}}^{-i}
		+ \infabs{q_{k}}^{-i} \left( c_{k} \right)^{\frac{i}{3}}\right)	\\
		& & \quad \quad \quad  \times \left( \left( c_{k} \right)^{-\frac{1}{3}}
		\left( c_{k} \right)^{j}\infabs{q_{k}}^{-j} +
		\infabs{q_{k}}^{-j} \left( c_{k} \right)^{\frac{j}{3}}\right) \\
		& = & 8 \displaystyle\sum_{k=1}^{\infty} \infabs{q_{k}} \infabs{q_{k}}^{-i-j} \left( 	\left( c_{k}
		\right)^{i-\frac{1}{3}} + \left(c_{k}\right)^{\frac{i}{3}}\right)
		\left(\left( c_{k} \right)^{j-\frac{1}{3}} + \left( c_{k} \right)^{\frac{j}{3}}\right). \
	\end{eqnarray*}
	However, we have that $i+j=1$ and so this reduces to
	\begin{eqnarray*}
	  &  & 8 \displaystyle\sum_{k=1}^{\infty} \left( \left( c_{k} \right)^{i+j-\frac{2}{3}} + \left( c_{k}
	  \right)^{\frac{i+j}{3}} +	\left( c_{k} \right)^{\frac{i}{3}+j - \frac{1}{3}} +
		\left( c_{k} \right)^{i+\frac{j}{3}-\frac{1}{3}} \right) \\
		& = & 8 \displaystyle\sum_{k=1}^{\infty} \left( 2 \left( c_{k} \right)^{\frac{1}{3}}
		+	\left( c_{k} \right)^{\frac{2i}{3}} + \left( c_{k} \right)^{\frac{2j}{3}} \right) \\
		& \leq & 8 \displaystyle\sum_{k=1}^{\infty} 4 \left( c_{k} \right)^{2 \min\left\{i, j \right\}/3} \\
		& \stackrel{(\ref{eqn:ckcondition})}{<} & 32 \displaystyle\sum_{k=1}^{\infty} \left( c_{1}
		\right)^{2 \min\left\{i, j \right\}/3}  2^{-(k-1)}  \\
		& = & 64 \left( c_{1} \right)^{2 \min\left\{i, j \right\}/3} \quad < \quad \infty, \
	\end{eqnarray*}
	\noindent
	as required. This completes the proof of Lemma \ref{prop:kurz1}.
\end{proof}

\subsection{Proof of Theorem~\ref{thm:kurz} (Part $2$)}
\label{sec:proof-of-theorem-kurz-part-2}

In this section we prove the complementary inclusion to that of Proposition \ref{prop:kurz1}.

\begin{prop}
	\label{prop:kurz2}
	For every real $i,j > 0$ such that $i+j=1$,
	\begin{equation*}
		\label{eqn:kurz2}
		\Bad(i, j) \subseteq \bigcap_{\psi \in \mathcal{D}} \textbf{V}_{(i,j)}(\psi).	
	\end{equation*}
\end{prop}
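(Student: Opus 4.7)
The plan is to fix $\mathbf{x} \in \Bad(i, j)$ and $\psi \in \mathcal{D}$ and prove directly that $\mu(W) = 1$, where $W := \textbf{W}_{(i,j)}^{\mathbf{x}}(\psi) = \limsup_q \textbf{R}_\psi(q)$ with $q$ ranging over $\mathbb{Z}_{\neq 0}$. The strategy is a divergent Borel--Cantelli argument in two steps: a second-moment (Chung--Erd\H{o}s) inequality on $\psi$-adapted blocks to show $\mu(W) > 0$, followed by a $0$--$1$ law to upgrade to full measure. For the $0$--$1$ law, monotonicity of $\psi$ gives $T_{\mathbf{x}}^{-1}\bigl(\textbf{R}_\psi(q)\bigr) \subseteq \textbf{R}_\psi(q-1)$ for $q \geq 2$ (where $T_{\mathbf{x}}\colon \boldsymbol\gamma \mapsto \boldsymbol\gamma + \mathbf{x}$ on the torus), so that $W_+ := \limsup_{q \geq 1} \textbf{R}_\psi(q)$ is $T_{\mathbf{x}}$-invariant modulo null sets. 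Since $\mathbf{x}$ is irrational, $T_{\mathbf{x}}$ is ergodic with respect to $\mu$ by Weyl's theorem, so $\mu(W_+) \in \{0, 1\}$; as $W_+ \subseteq W$, it suffices to prove $\mu(W_+) > 0$.

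For this, divergence of $\sum\psi(r)$ allows me to partition $\mathbb{N}$ into consecutive blocks $B_k = (M_{k-1}, M_k]$ chosen so that $S_k := \sum_{q \in B_k} \mu\bigl(\textbf{R}_\psi(q)\bigr) \asymp 1$. Writing $U_k := \bigcup_{q \in B_k} \textbf{R}_\psi(q)$, the Chung--Erd\H{o}s inequality yields
\begin{equation*}
	\mu(U_k) \, \geq \, \frac{S_k^2}{T_k}, \quad T_k \, := \, \sum_{q_1, q_2 \in B_k} \mu\bigl(\textbf{R}_\psi(q_1) \cap \textbf{R}_\psi(q_2)\bigr);
\end{equation*}
since $S_k = O(1)$, it suffices to prove $T_k = O(1)$ uniformly in $k$. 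The diagonal contributes $S_k$; for the off-diagonal terms, setting $n := q_2 - q_1$, nonempty intersection forces $\norm{nx_1} \leq 2\psi^i(M_{k-1})$ and $\norm{nx_2} \leq 2\psi^j(M_{k-1})$, whereupon $\mathbf{x} \in \Bad(i,j)$ applied to $n$ yields the separation $|n| > c'/\psi(M_{k-1})$ for a constant $c' = c'(i,j,\mathbf{x}) > 0$.

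The central technical hurdle is the corresponding \emph{counting} statement: one needs the number of nonzero integers $n$ with $|n| \leq 2M_k$ whose image $n\mathbf{x}$ lies in the anisotropic box $\bigl[-2\psi^i(M_{k-1}), 2\psi^i(M_{k-1})\bigr] \times \bigl[-2\psi^j(M_{k-1}), 2\psi^j(M_{k-1})\bigr]$ to be bounded above by a constant multiple of the heuristic volume count $M_k\,\psi(M_{k-1})$, which is itself $O(1)$ if the $M_k$ grow geometrically. Converting the pointwise $\Bad(i,j)$ lower bound on $\max\bigl\{\norm{nx_1}^{1/i}, \norm{nx_2}^{1/j}\bigr\}$ into such a counting bound is where the badly approximable character of $\mathbf{x}$ does its real work: I plan to partition the target box into minimal Dirichlet-type cells and invoke $\Bad(i, j)$ on each cell to force at most one lattice point per cell, echoing how bounded partial quotients drive the classical $i=j=1/2$ case. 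Once that count is in hand the off-diagonal sum collapses to $O(1)$, so $\mu(U_k) \geq \delta > 0$ uniformly in $k$; since $U_k \subseteq \bigcup_{q > M_{k-1}} \textbf{R}_\psi(q)$ and $M_{k-1} \to \infty$, this yields $\mu(W_+) \geq \delta$, which the ergodicity argument promotes to $\mu(W_+) = 1 = \mu(W)$, as required.
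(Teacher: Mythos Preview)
Your zero--one step is correct and in fact slicker than the paper's: the inclusion $T_{\mathbf{x}}^{-1}\textbf{R}_\psi(q) \subseteq \textbf{R}_\psi(q-1)$ for $q\ge 2$ does make $W_+$ essentially $T_{\mathbf{x}}$-invariant, and ergodicity of the irrational toral rotation finishes the upgrade from positive to full measure. The paper instead manufactures an auxiliary $\psi_4 \in \mathcal{D}$ with $\psi_4(|q|)/\psi(|q+q'|)\to 0$ for every fixed $q'$ and then applies Kurzweil's density lemma ($\mu(U)>0$ and $V$ dense imply $\mu(U\oplus V)=1$) with $V=\{q\mathbf{x}:q\in\mathbb{Z}_{\neq 0}\}$; your ergodicity argument is a clean substitute.

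The genuine gap is in the positive-measure step. You fix blocks with $S_k\asymp 1$ and then need $M_k\,\psi(M_{k-1})=O(1)$, which you note would follow \emph{if the $M_k$ grow geometrically}. But for general $\psi\in\mathcal{D}$ these two constraints are incompatible: take $\psi(r)=1/(r\log r)$, where $S_k\asymp 1$ forces $M_k\asymp M_{k-1}^{\,e}$, so that $M_k\,\psi(M_{k-1})\asymp M_{k-1}^{\,e-1}/\log M_{k-1}\to\infty$. Your Dirichlet-cell plan only reproduces the separation $|n_1-n_2|\gtrsim 1/\psi(M_{k-1})$ already extracted from $\Bad(i,j)$, yielding a count of order $M_k\,\psi(M_{k-1})$ and hence an off-diagonal contribution to $T_k$ of order $M_k\,\psi(M_{k-1})\cdot S_k$, which blows up; Chung--Erd\H{o}s then gives only $\mu(U_k)\gtrsim 1/(M_k\psi(M_{k-1}))\to 0$. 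The paper avoids this by a preliminary refinement that you are missing: it replaces $\psi$ by $\psi_1(r)=\min\{\psi(r),\,a^\ast/2,\,a_{\ast}c(\mathbf{x})/(2r)\}$ (still divergent) and then by $\psi_2$ which is constant on \emph{geometric} blocks $(k^t,k^{t+1}]$. The bound $\psi_2(r)\lesssim 1/r$ is exactly what makes the $\Bad(i,j)$ separation render the rectangles $\textbf{R}_{\psi_2}(q)$ within a single block pairwise disjoint. The paper then does not use a second-moment estimate at all, but a direct growth argument: assuming the tail union has small measure, it bounds how many of the new centres $q\mathbf{x}$ (with $k^t<|q|\le k^{t+1}$) can fall inside the doubled earlier set $\textbf{2R}_t$ --- using a doubling lemma $\mu(\textbf{2R}_t)\le 2\mu(\textbf{R}_t)$ --- and concludes $\mu(\textbf{R}_{t+1}\setminus\textbf{R}_t)\gtrsim \sum_{k^t<|q|\le k^{t+1}}\psi_1(|q|)$, whose divergence contradicts $\textbf{R}_t\subseteq[0,1]^2$. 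If you wish to rescue the Chung--Erd\H{o}s route, the missing idea is the refinement to $\psi\lesssim 1/r$; you must then work on geometric blocks where $S_k$ is bounded above but not below, and control $T_k/S_k$ rather than $T_k$ absolutely.
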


\begin{proof}

	We are required to show that if $\mathbf{x} \in \Bad(i, j)$ then for every divergent approximating function $\psi$
	we have that
	\begin{equation*}
		\mu \left( \textbf{W}_{(i, j)}^{\mathbf{x}}(\psi) \right) =1.
	\end{equation*}
	To do this we first prove the intermediary result that for every $\mathbf{x} \in \Bad(i, j)$ we have
	\begin{equation}
		\label{eqn:posmeas}
		\mu \left( \textbf{W}_{(i, j)}^{\mathbf{x}}(\psi) \right) >0
	\end{equation}
	for every $\psi \in \mathcal{D}$.
	
	Fix $\mathbf{x} \in \Bad(i, j)$. By definition there exists a constant $c(\mathbf{x})>0$ such that
	for all natural numbers $q$
	\begin{equation*}
		\max\left\{ \norm{q x_1}^{1/i}, \norm{q x_2}^{1/j} \right\} > \frac{c(\mathbf{x})}{q}.
	\end{equation*}
	Next, choose any function $\psi \in \mathcal{D}$. To ensure that certain technical
	conditions required later in the proof are met we will work with a refinement of $\psi$. Let
	\begin{equation*}
		a^{\ast}:= 2^{-1/ \max\left\{i, j\right\}} \quad \quad \text{ and } \quad \quad
		a_{\ast}:= 2^{-1/ \min\left\{i, j \right\}},
	\end{equation*}
	then for each $r \in \mathbb{N}$ set		
	\begin{equation*}
		\psi_1(r): = \min \left\{ \psi(r), \quad \frac{a^{\ast}}{2}, \quad  
		\frac{a_{\ast}\, c(\mathbf{x})}{2 \infabs{r}}  \right\}.
	\end{equation*}
	Finally, choose any integer $k$	such that
	\begin{equation}
		\label{eqn:kbound}
		k> 4
	\end{equation}
	and for each natural number $r$ define
	\begin{equation*}
		\psi_2(r): =
		\begin{cases}
			\psi_1(k), & \quad r \leq k. \\
  		\psi_1(k^{t+1}), & \quad k^t < r \leq k^{t+1} \quad \quad \quad \text{ for each } t \in \mathbb{N}.
  	\end{cases}
	\end{equation*}
	
	It is easy to see that for each $r \in \mathbb{N}$
	\begin{equation}
		\label{eqn:seqorder}
		\psi_2(r) \leq \psi_1(r) \leq \psi(r)
	\end{equation}
	and that $\psi_1 \in \mathcal{D}$.
	It is also clear that $\psi_2$ is decreasing and strictly positive.  Furthermore,
	\begin{eqnarray*}
		\sum_{r=1}^{\infty} \psi_2(r) & \geq & \sum_{t=1}^{\infty} \, \sum_{r=k^t+1}^{k^{t+1}} \psi_2(r) \\
		& =
		& \sum_{t=1}^{\infty} \left( k^{t+1} -  k^t \right) \psi_2(k^{t+1}) \\
		& =
		& \frac{1}{k} \sum_{t=1}^{\infty} \left( k^{t+2} - k^{t+1} \right)
		\psi_1(k^{t+1}) \\
		& \geq
		& \frac{1}{k} \sum_{t=1}^{\infty} \, \sum_{r=k^{t+1}+1}^{k^{t+2}}
		\psi_1(r) \\
		& = & \frac{1}{k} \, \sum_{r=k^2+1}^{\infty} \psi_1(r) \quad = \quad \infty, \
	\end{eqnarray*}
	and so $\psi_2$ too is a divergent approximating function.
	
	With reference to \S\ref{sec:proof-of-theorem-kurz-part-1}, inequality (\ref{eqn:seqorder}) and the characterisation
  of $\textbf{W}_{(i, j)}^{\mathbf{x}}(\psi)$ in terms of the rectangles $\textbf{R}_{\psi}(q)$ given by (\ref{eqn:newchar}) now guarantee
	that the following statement is sufficient to prove that
	(\ref{eqn:posmeas}) holds for every choice of function $\psi$.
	For every integer $r \geq 1$
	\begin{equation}
		\label{eqn:posmeas'}
		\mu \left( \bigcup_{\infabs{q}=r+1}^{\infty} \textbf{R}_{\psi_2}(q) \right) \,  \geq \, 
		a_\ast \, c(\mathbf{x})/8.
	\end{equation}
	Note that this statement is in terms of the constructed function $\psi_2$.
	To prove (\ref{eqn:posmeas'}) we will show that there cannot exist a natural number $t_0$ such that it fails to hold when $r=k^{t_0}$.
	Assume that such a $t_0$ exists and consider the collection of rectangles defined by
	\begin{equation*}
		\textbf{R}_t:= \textbf{R}\left( \psi_2, t \right):= \bigcup_{\infabs{q}=k^{t_o}+1}^{k^t} \textbf{R}_{\psi_2}(q)
		\quad \quad \quad \text{ for } t=t_0+1, \text{ } t_0+2, \ldots.
	\end{equation*}
	We will demonstrate that the measure of the set $\textbf{R}_t$ is unbounded as $t$ increases and in doing so reach a contradiction, 
	as each $\textbf{R}_t$ is contained in $[0,1]^2$. We will do this by estimating the size of a suitable sum of the measure of 
	set differences of the form $\textbf{R}_{t+1} \setminus \textbf{R}_t$.
	
	By construction each $\textbf{R}_{t+1}$ is obtained from $\textbf{R}_t$ by adding $2(k^{t+1}-k^t)$ new rectangles
	to those of $\textbf{R}_t$. These new rectangles are centred at the points $q\mathbf{x}$
	for which $k^t < \infabs{q} \leq k^{t+1}$.
	To estimate $\mu \left( \textbf{R}_{t+1} \setminus \textbf{R}_t \right)$
	we will find an upper bound to the number of the new rectangles that intersect any existing rectangle of
	$\textbf{R}_t$.  In particular, we will find an upper bound to the cardinality
	of the set $\textbf{J}_{t+1} \cap \textbf{2R}_t$, where $\textbf{J}_{t+1}$ denotes the set of points
	$q\mathbf{x}$ for which $k^t < \infabs{q} \leq k^{t+1}$ and
	\begin{equation*}
		\textbf{2R}_t:= \bigcup_{\infabs{q}=k^{t_o}+1}^{k^t} \textbf{R}_{2\psi_2}(q)  \quad \quad \quad
		\text{ for } t=t_0+1, t_0+2, \ldots.
	\end{equation*}
	This will suffice as $\psi_2$ is non-increasing. Before proceeding we first notice that, since the vector $\mathbf{x}$ was chosen from $\Bad(i,j)$,	if $q\mathbf{x}$ and  $q'\mathbf{x}$ are members of $\textbf{J}_{t+1}$ then
	\begin{equation}
		\label{eqn:jseparation}
		\max \left\{ \norm{qx_1-q'x_1}^{1/i}, \norm{qx_2-q'x_2}^{1/j} \right\} \, \geq
		 \,	\frac{c(\mathbf{x})}{\infabs{q-q'}} \,
		 \geq \, \frac{c(\mathbf{x})}{2k^{t+1}},
	\end{equation}
	providing that the integers $q$ and $q'$ are distinct.

	The collection $\textbf{2R}_t$ can be partitioned into two exhaustive subcollections (which we will assume without loss of generality
	are non-empty). Recalling that
	$a_{\ast}:= 2^{-1/ \min\left\{i, j\right\}}$, define
	\begin{equation*}
		\textbf{2R}_t^{(1)}:= \, \bigcup \, \textbf{R}_{2\psi_2}(q),
	\end{equation*}
	where the union runs over all non-zero $q$ with $k^{t_0} < \infabs{q} \leq k^t$ such that
	\begin{equation*}
		2\psi_2(\infabs{q}) \, < \, \frac{a_{\ast} \,c(\mathbf{x})}{2k^{t+1}}.
	\end{equation*}
	In turn, let
	\begin{equation*}
		\textbf{2R}_t^{(2)}:= \, \bigcup \, \textbf{R}_{2\psi_2}(q),
	\end{equation*}
	where this time the union runs over $q$ with $k^{t_0} < \infabs{q} \leq k^t$ such that
	\begin{equation*}
		2\psi_2(\infabs{q}) \,  \geq \, \frac{a_{\ast} \,c(\mathbf{x})}{2k^{t+1}}.
	\end{equation*}
	The intersections $\textbf{J}_{t+1} \cap \textbf{2R}_t^{(1)}$ and $\textbf{J}_{t+1} \cap \textbf{2R}_t^{(2)}$
	will now be dealt with independently.
	
	The subcollection $\textbf{2R}_t^{(1)}$ consists of rectangles of sidelengths
	\begin{equation*}
	2(2\psi_2(\infabs{q}))^{i} \quad \text{ and } \quad 2(2\psi_2(\infabs{q}))^{j}
	\end{equation*}
	respectively and both
	\begin{equation*}
		2 \left( 2\psi_2(\infabs{q}) \right)^{i} \, < \, \left(\frac{c(\mathbf{x})}{2k^{t+1}} \right)^i
		\quad \quad \text{ and } \quad \quad
		2 \left( 2\psi_2(\infabs{q}) \right)^{j} \, < \, \left(\frac{c(\mathbf{x})}{2k^{t+1}} \right)^j.
	\end{equation*}
	This follows upon noticing that $\max \left\{a_\ast^i, a_\ast^j \right\}=1/2$.
	Thus, statement (\ref{eqn:jseparation}) implies at most one element of $\textbf{J}_{t+1}$ can lie in each
	rectangle of $\textbf{2R}_t^{(1)}$ and so $\textbf{J}_{t+1} \cap \textbf{2R}_t^{(1)}$ contains at most
	$2(k^t-k^{t_0}) < 2k^t$ elements.
	
	Estimating the cardinality of $\textbf{J}_{t+1} \cap \textbf{2R}_t^{(2)}$ requires more work and we 
	argue as follows. 	
	If a point $\boldsymbol\gamma_0$ lies in the subcollection $\textbf{2R}_t^{(2)}$ then it must lie in a rectangle of the form
	$\textbf{R}_{2\psi_2}(q_0) \subseteq \textbf{2R}_t^{(2)}$
	for some integer $q_0$ with $k^{t_0} < \infabs{q_0} \leq k^t$. This rectangle must have respective sidelengths
	$2(2\psi_2(\infabs{q_0}))^{i}$ and $2(2\psi_2(\infabs{q_0}))^{j}$ and by definition we have
	\begin{equation*}
		2 \left( 2\psi_2(\infabs{q_0}) \right)^{i} \, \geq \,  2\left(\frac{a_\ast \, 
		c(\mathbf{x})}{2k^{t+1}} \right)^i
		\quad \, \text{ and } \quad \,
		2 \left( 2\psi_2(\infabs{q_0}) \right)^{j} \, \geq \,  2 \left(\frac{a_\ast \, 
		c(\mathbf{x})}{2k^{t+1}} \right)^j.
	\end{equation*}
	It is now clear that there must exist a point $\mathbf{y}(\boldsymbol\gamma_0) \in \textbf{R}_{2\psi_2}(q_0)$
	such that $\boldsymbol\gamma_0 $ is contained in a subrectangle, say $\textbf{S}(\boldsymbol\gamma_0)$, of
	$\textbf{R}_{2\psi_2}(q_0)$ centred at $\mathbf{y}(\boldsymbol\gamma_0)$ and of sidelengths
	$\left(a_\ast \,c(\mathbf{x})/2k^{t+1} \right)^i$ and
	$ \left( a_\ast \,c(\mathbf{x})/2k^{t+1} \right)^j$ respectively.
	The fact that $\max \left\{a_\ast^i, a_\ast^j \right\}=1/2$, twinned with equation
	(\ref{eqn:jseparation}), once more guarantees that only one point of $\textbf{J}_{t+1}$ may lie in any
	subrectangle of this type.
	Moreover, any two such subrectangles containing respective points $q\mathbf{x}$ and $q'\mathbf{x}$, both in
	$\textbf{J}_{t+1}$, must be disjoint.  Thus, the cardinality of
	$\textbf{J}_{t+1} \cap \textbf{2R}_t^{(2)}$ cannot exceed $\mu ( \textbf{2R}_t^{(2)} ) / \mu
	\left(\textbf{S}(\boldsymbol\gamma_0)\right)$.  We estimate the size of $\mu ( \textbf{2R}_t^{(2)} )$ by
	utilising the following lemma.	
	\begin{lem}
		\label{lem:hrbound}
		For every $t=t_0+1, t_0+2, \ldots$,
		\begin{equation*}
			\mu \left( \textbf{2R}_t \right) \, \leq \, 2 \mu \left( \textbf{R}_t \right).
		\end{equation*}		
	\end{lem}
	\begin{proof}[Proof of Lemma \ref{lem:hrbound}]
		For $s \in \mathbb{N}$, let
		\begin{equation*}
			\textbf{R}^s:= \bigcup_{\infabs{q}=k^{t_o}+1}^{k^{t_o}+s} \textbf{R}_{\psi_2}(q) \quad \quad \text{ and } \quad
			\quad \textbf{2R}^s:= \bigcup_{\infabs{q}=k^{t_o}+1}^{k^{t_o}+s} \textbf{R}_{2\psi_2}(q).
		\end{equation*}
		To prove Lemma \ref{lem:hrbound} it suffices to show that
		$\mu \left( \textbf{2R}^s \right) \leq 2 \mu \left( \textbf{R}^s \right)$	for all $s$.
		We proceed by induction. If $s=1$, then
		\begin{equation*}
			\mu (\textbf{R}^1) \, = \, 2\psi_2^{i}(k^{t_0}+1) \cdot 2\psi_2^{j}(k^{t_0}+1) \, = \, 
			4\psi_2(k^{t_0}+1).
		\end{equation*}
		Further,
		\begin{equation*}
			\mu ( \textbf{2R}^1 ) \, = \, 2(2\psi_2(k^{t_0}+1))^{i} \cdot 2(2\psi_2(k^{t_0}+1))^{j} \, = \, 2\cdot
			4\psi_2(k^{t_0}+1) \, = \, 2 \mu (\textbf{R}^1)
		\end{equation*}
		and the statement holds.
		
		Next, assume the hypothesis holds when $s=s'$ and define a transformation $T$ on the torus $[0,1]^2$ by
		\begin{equation*}
			T(\boldsymbol\gamma):= \left( 2^{i}\gamma_1, \text{ } 2^{j}\gamma_2 \right) \quad \quad \quad
			\forall \text{ } \boldsymbol\gamma \in [0,1]^2.
		\end{equation*}
		For any subset $\textbf{D} \subseteq [0,1]^2$, we denote by $T(\textbf{D})$ the set of all points
		$T(\boldsymbol\gamma)$ where $\boldsymbol\gamma \in \textbf{D}$.
		Let $\textbf{D}^{s'+1}:= \textbf{R}^{s'+1} \setminus \textbf{R}^{s'}$, then, since by definition 
		$\psi_2$ does not exceed $a^{\ast}(i,j) / 2$,
		\begin{equation}
			\label{eqn:lemma1}
			\mu( T(\textbf{D}^{s'+1})) = 2^{i} \cdot 2^{j} \cdot \mu(\textbf{D}^{s'+1}) =
			2\mu(\textbf{D}^{s'+1}).
		\end{equation}
		It is also clear that
		\begin{equation*}
			\textbf{2R}^{s'+1} = \textbf{2R}^{s'} \cup T(\textbf{D}^{s'+1}),
		\end{equation*}
		from which it follows that
		\begin{eqnarray*}
			\mu(\textbf{2R}^{s'+1}) & = & \mu( \textbf{2R}^{s'} \cup T(\textbf{D}^{s'+1})) \\
			& \leq & \mu( \textbf{2R}^{s'}) + \mu( T(\textbf{D}^{s'+1})) \\
			& \leq & 2 \mu(\textbf{R}^{s'}) + 2\mu(\textbf{D}^{s'+1}) \quad \quad
			\text{ (by assumption and (\ref{eqn:lemma1}) resp.)} \\
			& = & 2 \mu(\textbf{R}^{s'} \cup \textbf{D}^{s'+1}) \quad \quad \quad \quad
			\text{ (since } \textbf{R}^{s'} \text{ and } \textbf{D}^{s'+1} \text{ are disjoint)} \\
			& = & 2\mu(\textbf{R}^{s'+1}), \
		\end{eqnarray*}
		as required.
	\end{proof}
	
	We return to our calculation. 
	Assuming as we are that statement (\ref{eqn:posmeas'}) is	false, Lemma~\ref{lem:hrbound} now yields that
	\begin{equation*}
		\mu(\textbf{2R}_t^{(2)}) \, \leq \, \mu(\textbf{2R}_t) \, \leq \, 2\mu(\textbf{R}_t) \, 
		< \, a_\ast \, c(\mathbf{x})/4.
	\end{equation*}
	Thus,
	\begin{equation*}
		\# ( \textbf{J}_{t+1} \cap \textbf{2R}_t^{(2)} ) \quad \leq \quad
		\frac{\mu ( \textbf{2R}_t^{(2)})}{\mu(\textbf{S}(\boldsymbol\gamma_0))} \quad
		< \quad \frac{a_\ast \, c(\mathbf{x})}{ 4\left(
		a_\ast \, c(\mathbf{x})/2k^{t+1} \right)^{i + j}} \quad = \quad \frac{k^{t+1}}{2}
	\end{equation*}
	and we have found our second upper bound. 
	
	 Recalling our intention to estimate $\mu \left( \textbf{R}_{t+1} \setminus \textbf{R}_t \right)$, we 
	 can now write down an upper bound for the number of rectangles added to $\textbf{R}_t$ to make 
	 $\textbf{R}_{t+1}$ that do intersect existing rectangles of $\textbf{R}_t$. 
	Indeed, this number cannot exceed
	\begin{equation}	
		\label{eqn:cardJt}
		 \# \left( \textbf{J}_{t+1} \cap \textbf{2R}_t \right) \quad \leq \quad 2k^t + k^{t+1}/2,
	\end{equation}
	which follows upon noticing that
	\begin{equation*}
		\textbf{J}_{t+1} \cap \textbf{2R}_t \quad = \quad (\textbf{J}_{t+1} \cap \textbf{2R}_t^{(1)}) \cup
		(\textbf{J}_{t+1} \cap \textbf{2R}_t^{(2)}).
	\end{equation*}
	To complete our arguement we require one final piece of notation. Let
	\begin{equation*}
		\textbf{L}_{t+1}:=\left\{q \in \mathbb{Z}_{\neq0}: q\mathbf{x} \in \textbf{J}_{t+1},
		q\mathbf{x} \notin \textbf{2R}_t \right\}.
	\end{equation*}
	The integers $q \in \textbf{L}_{t+1}$ each correspond to a rectangle of
	$\textbf{R}_{t+1}$ that does not intersect any rectangle of $\textbf{R}_t$.
	So, by (\ref{eqn:cardJt})
	\begin{eqnarray}
		 \nonumber \# (\textbf{L}_{t+1}) & \geq & 2(k^{t+1}-k^t) - (2k^t + k^{t+1}/2) \\
		 \nonumber & = & (2-4/k-1/2) k^{t+1} \\
		 \nonumber & \stackrel{(\ref{eqn:kbound})}{>} & (2-1-1/2) k^{t+1} \\
		 \label{eqn:boundinarray} & = & k^{t+1}/2.\
	\end{eqnarray}
	
	We will now estimate $\mu\left( \textbf{R}_{t+1} \setminus \textbf{R}_t \right)$ by considering the inclusion
	\begin{equation}
		\label{eqn:differenceinclusion}
		\textbf{R}_{t+1} \setminus \textbf{R}_t \quad \supset \quad \bigcup_{q \in \textbf{L}_{t+1}} \textbf{R}_{\psi_2}(q).
	\end{equation}
	The rectangles $\textbf{R}_{\psi_2}(q)$ in the above union have sidelengths $2\psi_2^{i}(\infabs{q})$ and $2\psi_2^{j}(\infabs{q})$ respectively.
	Further, if $q, q' \in \textbf{L}_{t+1}$ then $k^t<\infabs{q}, \infabs{q'} \leq k^{t+1}$
	and so
	\begin{equation}
		\label{eqn:badlt}
		\max \left\{ \norm{qx_1-q'x_1}^{1/i}, \norm{qx_2-q'x_2}^{1/j} \right\} \quad \stackrel{(\ref{eqn:jseparation})}{\geq} \quad
		\frac{c(\mathbf{x})}{2k^{t+1}}.
	\end{equation}
	Recall that $\psi_2$ is constant on each $\textbf{L}_{t+1}$ by definition, taking the value 
	$\psi_2(k^{t+1})$, and also that
	\begin{equation*}
		 \psi_2(r) \, \leq \,  \frac{a_{\ast}\, c(\mathbf{x})}{2 \infabs{r}}.
	\end{equation*}
	Therefore, we have both
	\begin{equation*}
		2 \psi_2^{i}(\infabs{q})  \, = \, 2\psi_2^{i}(k^{t+1}) \, 
		< \, \left(\frac{c(\mathbf{x})}{2k^{t+1}} \right)^i
	\end{equation*}	
	and 
	\begin{equation*}
		2 \psi_2^{j}(\infabs{q})  \, = \, 2\psi_2^{j}(k^{t+1}) \, 
		< \, \left(\frac{c(\mathbf{x})}{2k^{t+1}} \right)^j.
	\end{equation*}	
	Combining these inequalities with statement (\ref{eqn:badlt}) yields that the rectangles $\textbf{R}_{\psi_2}(q)$ on the RHS of
	(\ref{eqn:differenceinclusion})	are disjoint. Hence,	
	\begin{eqnarray*}
		\mu \left( \textbf{R}_{t+1} \setminus \textbf{R}_t \right) & \geq & \sum_{q \in \textbf{L}_{t+1}} 
		\mu\left( R_{\psi_2}(q) \right) \\
		& = & 2^2 \sum_{q \in \textbf{L}_{t+1}} \psi_2(\infabs{q}) \\
		& \stackrel{(\ref{eqn:boundinarray})}{>}
		& 2k^{t+1} \psi_2(k^{t+1}) \\
		& > & 2(k^{t+1}-k^t)\psi_1(k^{t+1}) \\ 
		& = & \sum_{\infabs{q}=k^t+1}^{k^{t+1}}	\psi_1(k^{t+1})\\ 
		& = & \sum_{\infabs{q}=k^t+1}^{k^{t+1}} \psi_1(\infabs{q}). \
	\end{eqnarray*}
	Finally, $\psi_1$ is divergent; i.e.
	\begin{equation*}
		\sum_{\infabs{q}=1}^{\infty} \psi_1(\infabs{q})  \quad = \quad \infty,
	\end{equation*}
	whence $\sum_{t > t_0}\mu \left( \textbf{R}_{t+1} \setminus \textbf{R}_t \right) = \infty$.
	Since $ R_t \subseteq R_{t+1}$ for any $t > t_0$, this implies that 
	$\mu(R_t) \rightarrow \infty$ as $t\rightarrow \infty$. 
	However, each set $R_t$ is contained in $[0,1]^2$ and so a contradiction is reached. This means 
	the assumption that (\ref{eqn:posmeas'}) fails for some $r=k^{t_o}$ 
	is indeed false, and consequently 
	\begin{equation*}
		\mu \left( \textbf{W}_{(i, j)}^{\mathbf{x}}(\psi) \right) >0
	\end{equation*}
	for every $\psi \in \mathcal{D}$ as desired.

	To complete the proof of Proposition \ref{prop:kurz2} we must now show if $\mathbf{x} \in \Bad(i, j)$ then
	\begin{equation*}
		\mu \left( \textbf{W}_{(i, j)}^{\mathbf{x}}(\psi) \right) =1
	\end{equation*}
	for every $\psi \in \mathcal{D}$.
	Our method will be through the application of two lemmas, the first of which is due to Kurzweil (\cite[Lemma 13]{MR0073654}).
	\begin{lem}[Kurzweil]
		\label{lem:density}
		Let $U$ and $V$ be subsets of $[0,1]^2$. If $\mu(U)>0$ and $V$ is dense in $[0,1]^2$ then
		$\mu(U \oplus V) = 1$, where $U \oplus V:=\left\{u+v \right. \, ($mod~$ 1):\left. \, u \in U, v \in V \right\}$.
	\end{lem}
	\noindent
	\begin{lem}
		\label{lem:subseries}	
		For every $\psi \in	\mathcal{D}$ and for every natural number $s$ we have
		\begin{equation*}
			\sum_{r=1}^{\infty} \psi(sr) = \infty.
		\end{equation*}
	\end{lem}
	\begin{proof}[Proof of Lemma \ref{lem:subseries}]
		Suppose $s\geq1$ and for ease of notation set $\psi(0):=\psi(1)$. Consider the $s$-subseries
		$\sum_{r=0}^\infty \psi(sr+k)$ for each $k=0,\dots,s-1$. Every term $\psi(r')$, $r' \in \mathbb{N}$,
		appears exactly once in exactly one $s$-subseries.
		If every $s$-subseries had a finite sum then the original series $\sum_{r=1}^{\infty} \psi(r)$ would
		also have a finite sum (precisely equal to the sum of the sums of the $s$-subseries).
		Since the original series does not have a finite sum, at least one of the $s$-subseries must diverge,
		say $\sum_{r=0}^\infty \psi(sr+k_0)=\infty$. Since $\psi$ is decreasing
		$\psi(sr)\geq \psi(sr+k_0)$ and so $\sum_{r=0}^\infty \psi(sr)=\infty$ and Lemma
		\ref{lem:subseries} holds.
	\end{proof}
	
	Returning to the proof of Proposition \ref{prop:kurz2}, fix a divergent approximating function $\psi$ and a vector $\mathbf{x} \in \Bad(i, j)$.
	Once again, we will refine $\psi$ before proceeding. Firstly, we will construct a function $\psi_3 \in \mathcal{D}$ such that
	\begin{equation}
		\label{eqn:seqlim}
		\lim_{r\rightarrow \infty} \left( \frac{\psi_3(r)}{\psi(r)} \right) = 0.
	\end{equation}
	Let $r_0=0$ and choose $r_1 \geq 1$ such that the inequality $\sum_{r=1}^{r_1} \psi(r) \geq 1$ holds.
	Then in general	construct inductively a strictly increasing sequence $\left\{r_k\right\}_{k=0}^\infty$
	such that for each $k$
	\begin{equation}
		\label{eqn:seqdef}
		\sum_{r=r_{k-1}+1}^{r_k}\psi(r) \geq k.
	\end{equation}
	This is always possible since $\sum_{r=1}^{\infty}\psi(r)$ diverges, so the partial 
	sums from any starting point must
	tend to infinity.
	Next, define $c_r:=1/\sqrt{k}$ if $r_{k-1}<r\leq r_k$ and $\psi_3(r):=c_r\psi(r)$.
	Equation (\ref{eqn:seqlim}) therefore holds as $\psi_3(r)/\psi(r)=c_r$ tends to zero.
	Both $\psi$ and $\left\{c_r\right\}$ are strictly positive and decreasing,
	hence $\psi_3$ is strictly positive and decreasing.  Also, by construction, inequality
	(\ref{eqn:seqdef}) guarantees that
	\begin{equation*}
		\sum_{r=r_{k-1}+1}^{r_k}\psi_3(r) \, = \, \frac{1}{k}\sum_{r=r_{k-1}+1}^{r_k}\psi(r) \,
		\geq \, 1,
	\end{equation*}
	and so
	\begin{equation*}
		\sum_{r=1}^{r_k} \psi_3(r)\geq k.
	\end{equation*}
	This shows that the sum of $\psi_3$ diverges and we have verified that $\psi_3 \in \mathcal{D}$.
	
	By Lemma \ref{lem:subseries},
	\begin{equation*}
		\sum_{r=1}^{\infty} \psi_3(sr) = \infty,
	\end{equation*}
	for every natural number $s$.
	Consequently, there must exist a strictly increasing sequence of natural numbers $\left\{s_r\right\}_{r \in \mathbb{N}}$
	with $s_r \rightarrow \infty$ as $r \rightarrow \infty$ such that
	\begin{equation*}
		\sum_{r=1}^{\infty}  \psi_3(s_r \cdot r) = \infty.
	\end{equation*}
	Accordingly, we define $\psi_4(r):=\psi_3(s_r \cdot r)$. Hence, for any fixed non-zero integer $q'$ we have that
	\begin{equation}
		\label{eqn:seq3lim}
		\lim_{\infabs{q} \rightarrow \infty} \left( \frac{\psi_4(\infabs{q})}{\psi(\infabs{q+q'})} \right) = 0.
	\end{equation}
	It is also clear that $\psi_4$ is a divergent approximating function and therefore we know by intermediary result (\ref{eqn:posmeas})
	that
	\begin{equation}
		\label{eqn:Umeasure}
		\mu \left( \textbf{W}_{(i, j)}^{\mathbf{x}}(\psi_4)  \right) \, > \, 0.
	\end{equation}
	In addition, if we choose some vector $\mathbf{y}$ such that
	\begin{equation*}
		\mathbf{y} \, \in \, \textbf{W}_{(i, j)}^{\mathbf{x}}(\psi_4) \, \, \stackrel{(\ref{eqn:newchar})}{=} \, \,
		\bigcap_{k=1}^{\infty} \bigcup_{\infabs{q}=k}^{\infty} \textbf{R}_{\psi_4}(q),
	\end{equation*}
	then for every natural number $k$ there are infinitely many integers $q$ with $\infabs{q} \geq k$ such that
	$\mathbf{y} \in \textbf{R}_{\psi_4}(q)$.	It follows that $\mathbf{y}+q'\mathbf{x}$ is a member of the set 
	of $\boldsymbol\gamma \in [0, 1]^2$ for which
	\begin{equation*}
		\max\left\{ \norm{(q+q')x_1-\gamma_1}^{1/i}, \,  \norm{(q+q')x_2-\gamma_2}^{1/j} \right\}
		\, \, \leq \, \, \psi_4(\infabs{q})
	\end{equation*}
	for infinitely many integers $q$ satisfying $\infabs{q} \geq k$.
	For large enough $k$, equation (\ref{eqn:seq3lim}) implies that for each $q$ with $\infabs{q} \geq k$ the 
	set of $\boldsymbol\gamma$ defined above is contained in the rectangle
	$\textbf{R}_{\psi}(q+q')$.
	It follows that $\mathbf{y}+q'\mathbf{x}$ is contained in infinitely many rectangles of the form $\textbf{R}_{\psi}(q)$; i.e.,
	\begin{equation}
		\label{eqn:fullcontained}
		\mathbf{y}+q'\mathbf{x} \, \in \, \bigcap_{k=1}^{\infty} \bigcup_{\infabs{q}=k}^{\infty}
		\textbf{R}_{\psi}(q) \, = \, \textbf{W}_{(i, j)}^{\mathbf{x}}(\psi)
	\end{equation}
	for every natural number $q'$.	
	
	We are now in a position to apply Lemma \ref{lem:density}.
	With reference to the lemma, set
	\begin{equation*}
		U:= \textbf{W}_{(i, j)}^{\mathbf{x}}(\psi_4) \quad \text{and} \quad V:= \left\{q\mathbf{x}: q \in
		\mathbb{Z}_{\neq0} \right\}.
	\end{equation*}
	By equation (\ref{eqn:Umeasure}) we have $\mu(U)>0$ and, as mentioned in \S\ref{sec:Background2},
	Kronecker's Theorem implies that $V$ is dense in $[0,1]^2$ if $\mathbf{x}$ is irrational.
	Hence, Lemma \ref{lem:density} implies that $\mu(U\oplus V)=1$,
	from which equation (\ref{eqn:fullcontained}) gives
	\begin{equation*}
		\mu \left( \textbf{W}_{(i, j)}^{\mathbf{x}}(\psi) \right) =1
	\end{equation*}
	and the proof of Proposition \ref{prop:kurz2}, and indeed that of Theorem~\ref{thm:kurz}, is complete.
\end{proof}

\section{Proof of Theorem~\ref{thm:dimbad}}
\label{sec:ProofOfTheoremRefThmDimbad}

The proof of Theorem~\ref{thm:dimbad} makes use of the framework developed in
\cite{MR2231044}.
This framework was specifically designed to provide dimension results for a broad range of badly approximable sets.
In this section we show that $\Bad^{\mathbf{x}}(i,j)$ falls into this category when $\mathbf{x}$ is chosen
from $\Bad(i,j)$. First, we provide a simplification of the framework tailored to our needs.

Let $\mathcal{R} := \left\{ R_{\alpha} \subset \mathbb{R}^2 : \alpha \in J \right\} $ be a family of
subsets $R_{\alpha}$ of $\mathbb{R}^2$ indexed by an infinite countable set $J$. We will refer to the sets
$R_{\alpha}$ as \emph{resonant sets}. Furthermore, it will be assumed that each resonant set takes the form 
of a cartesian product; i.e., that each set $R_{\alpha}$ can be split into the images $R_{\alpha, t} \subset \mathbb{R}$, $t=1,2$, 
of its two projection maps along the two coordinate axis.
Next, let $\beta :J\rightarrow \mathbb{R}_{>0}:\alpha
\mapsto \beta_{\alpha}$ be a positive function on $J$ such that the number of
$\alpha \in J$ with $\beta_{\alpha}$ bounded above is finite. Thus, as $\alpha$ runs through $J$
the function $\beta_{\alpha}$ tends to infinity. Also, for $t=1,2$, let $\rho_t: \mathbb{R}_{>0} \rightarrow
\mathbb{R}_{>0}:r \mapsto \rho_t(r)$ be any real, positive, decreasing function such that $\rho_t(r) \rightarrow
0$ as $r \rightarrow \infty$.  We assume that either $\rho_1(r)\geq \rho_2(r)$ or $\rho_2(r)\geq \rho_1(r)$ for
large enough $r$.  Finally, for each resonant set $R_{\alpha}$ define a rectangular
neighbourhood $\mathcal{F}_{\alpha}(\rho_1, \rho_2)$ by
\begin{equation*}
	\mathcal{F}_{\alpha}(\rho_1, \rho_2):=\left\{ \mathbf{x} \in \mathbb{R}^2: \infabs{x_t - R_{\alpha, t}} \leq \rho_t
	\left( \beta_{\alpha} \right) \, \text{ for } t=1, 2 \right\},
\end{equation*}
where $\infabs{x_t - R_{\alpha, t}}:= \inf_{a \in \mathcal{R}_{\alpha, t}}\infabs{x_t - a}$.

We now introduce the general badly approximable set to which the results of \cite{MR2231044} relate. Define 
$\Bad(\mathcal{R}, \beta, \rho_1, \rho_2)$ to be the set of $\mathbf{x} \in [0,1]^2$ for which there exists 
a constant $c(\mathbf{x})>0$ such that
\begin{equation*}
  \mathbf{x} \notin  c(\mathbf{x})\mathcal{F}_{\alpha}(\rho_1, \rho_2)
  \quad \quad \forall \text {  } \alpha \in  J.
\end{equation*}
That is, $\mathbf{x} \in \Bad(\mathcal{R}, \beta, \rho_1, \rho_2)$ if there exists a constant $c(\mathbf{x})>0$
such that for all $\alpha \in J$
\begin{equation*}
	\infabs{x_t - R_{\alpha, t}} \, \, \geq \, \, c(\mathbf{x}) 
	\rho_t \left( \beta_{\alpha} \right) \quad \quad (t=1, 2).
\end{equation*}

The aim of the framework is to determine conditions under which the set
$\Bad(\mathcal{R}, \beta, \rho_1, \rho_2)$ has full Hausdorff dimension.
With this in mind, we begin with some useful notation.
For any fixed integers $k>1$ and $n \geq 1$, define
\begin{equation*}
	F_n:= \left\{ \mathbf{x} \in [0,1]^2 : \infabs{x_t - c_t} \leq \rho_t(k^n) \text{ for each } t=1,2 \right\}
\end{equation*}
to be the generic closed rectangle in $[0,1]^2$ with
centre $\mathbf{c}:=(c_1, c_2)$ and of side lengths given by $2\rho_1(k^n)$ and $2\rho_2(k^n)$ respectively.
Next, for any $\theta \in \mathbb{R}_{>0}$,
let
\begin{equation*}
\theta F_n := \left\{ \mathbf{x} \in [0,1]^2: \infabs{x_t - c_t} \leq
\theta\rho_t(k^n) \text{ for each } t=1,2 \right\}
\end{equation*}
denote the rectangle $F_n$ scaled by $\theta$.
Finally,  let $$J(n) := \left\{ \alpha \in J : k^{n-1} \leq
\beta_{\alpha} < k^n \right\}.$$ 

The following statement is a
simplification of Theorem~2 of \cite{MR2231044}, made possible by the properties of the
$2$-dimensional Lebesgue measure $\mu$.

\begin{thmKTV}
  Let $k$ be sufficiently large. Suppose there exists
  some $\theta \in \mathbb{R}_{>0}$ such that for any $ n \geq 1 $ and any rectangle $F_n $
  there exists a collection $\mathcal{C}(\theta F_n)$ of
  disjoint rectangles $2 \theta F_{n+1}$ contained within $\theta F_n$ such that
	\begin{equation}
		\label{eq:cond1}
		\# \mathcal{C}(\theta F_n) \geq \kappa_1 \frac{\mu \left(\theta F_n \right)}
		{\mu \left(\theta F_{n+1} \right)}
	\end{equation}
	and
	\begin{equation*}
    \# \left\{ 2 \theta F_{n+1} \subset \mathcal{C}(\theta F_n):
    R_{\alpha} \cap 2 \theta F_{n+1} \neq \emptyset \, \text{ for some }  \alpha \in J(n+1)\right\}
  \end{equation*}
  \begin{equation}
  \label{eq:cond2}
		\leq \kappa_2 \frac{\mu \left(\theta F_n \right)}{\mu \left(\theta F_{n+1} \right)},
	\end{equation}
	where $0< \kappa_2 < \kappa_1$ are absolute constants independent of $k$ and $n$.  Furthermore, suppose
  \begin{equation}
    \label{eq:cond3}
    \dim \left( \cup_{\alpha \in J} R_{\alpha} \right) < 2,
  \end{equation}
  then
  \begin{equation*}
    \dim \left( \Bad(\mathcal{R}, \beta, \rho_1, \rho_2) \right) = 2.
  \end{equation*}
\end{thmKTV}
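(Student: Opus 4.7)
The strategy is to construct a Cantor-like subset $\mathcal{K}$ of $\Bad(\mathcal{R}, \beta, \rho_1, \rho_2)$ and then show $\dim \mathcal{K} = 2$. The plan is to fix $k$ sufficiently large, pick an initial rectangle $\theta F_0$ (using condition (\ref{eq:cond3}) to ensure the starting rectangle lies in a region of positive two-dimensional Lebesgue measure in the complement of $\cup_\alpha R_\alpha$), and inductively define a descending chain of compact sets $\mathcal{K}_0 \supset \mathcal{K}_1 \supset \cdots$, where each $\mathcal{K}_n$ is a disjoint union of rectangles of shape $\theta F_n$. For the inductive step, given a rectangle $\theta F_n \subset \mathcal{K}_n$, I would take the collection $\mathcal{C}(\theta F_n)$ provided by the hypothesis, discard those $2\theta F_{n+1}$ that intersect some $R_\alpha$ with $\alpha \in J(n+1)$, and keep a single $\theta F_{n+1}$ centred inside each remaining $2\theta F_{n+1}$. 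Combining (\ref{eq:cond1}) and (\ref{eq:cond2}), at least $(\kappa_1 - \kappa_2)\,\mu(\theta F_n)/\mu(\theta F_{n+1})$ rectangles $\theta F_{n+1}$ are retained inside each $\theta F_n$, and the buffer of width $\theta\rho_t(k^{n+1})$ separating each $\theta F_{n+1}$ from the boundary of its enclosing $2\theta F_{n+1}$ keeps the resulting limit set well-separated from the resonant sets.

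Next, I would verify $\mathcal{K} := \bigcap_n \mathcal{K}_n \subset \Bad(\mathcal{R}, \beta, \rho_1, \rho_2)$. For $\mathbf{x} \in \mathcal{K}$ and any $\alpha \in J$, there is a unique $n \geq 0$ with $\alpha \in J(n+1)$, so $\beta_\alpha \in [k^n, k^{n+1})$. The construction guarantees $\infabs{x_t - R_{\alpha, t}} \geq \theta \rho_t(k^{n+1})$ for $t = 1, 2$; comparing $\rho_t(k^{n+1})$ to $\rho_t(\beta_\alpha)$ via monotonicity of $\rho_t$ then delivers $\infabs{x_t - R_{\alpha, t}} \geq c(\mathbf{x}) \rho_t(\beta_\alpha)$ for some positive constant $c(\mathbf{x})$, as required.

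The final and most delicate step is to show $\dim \mathcal{K} = 2$. I would define a probability measure $\mu_\ast$ on $\mathcal{K}$ by distributing mass uniformly among the level-$(n+1)$ children of each level-$n$ rectangle; the count of children being at least $(\kappa_1 - \kappa_2)\,\mu(\theta F_n)/\mu(\theta F_{n+1})$ means the $\mu_\ast$-mass of a typical level-$n$ rectangle is controlled by $(\kappa_1 - \kappa_2)^{-n}\,\mu(\theta F_n)$. I would then invoke the mass distribution principle: for a ball $B(\mathbf{x}, r)$, choose $n$ so that the \emph{longer} side of $\theta F_{n+1}$ is comparable to $r$ (using the assumption that one of $\rho_1, \rho_2$ eventually dominates), and bound the number of level-$(n+1)$ rectangles meeting $B$ by splitting $B$ along the shorter coordinate, giving a bound of the form $O\bigl(r/\rho_\ast(k^{n+1})\bigr)$ where $\rho_\ast$ is the smaller of the two. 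Multiplying this by the per-rectangle mass and simplifying yields $\mu_\ast(B) \leq C_k\, r^{2-\varepsilon(k)}$ with $\varepsilon(k) \to 0$ as $k \to \infty$, whence $\dim \mathcal{K} \geq 2 - \varepsilon(k)$ for every admissible $k$ and so $\dim \mathcal{K} = 2$. The chief obstacle is precisely this dimension estimate: the anisotropy of the rectangles prevents a direct self-similar argument, so the covering of an arbitrary ball must be executed by separately handling its extent in the long and short directions, with condition (\ref{eq:cond3}) invoked only to seed the construction in a region of positive Lebesgue measure disjoint from the (low-dimensional) union $\cup_\alpha R_\alpha$.
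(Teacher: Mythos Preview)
The paper does not actually prove Theorem~KTV: it is quoted, in simplified form, from Kristensen, Thorn and Velani \cite{MR2231044} and is only \emph{applied} in \S\ref{sec:ProofOfTheoremRefThmDimbad} to deduce Theorem~\ref{thm:dimbad}. So there is no in-paper proof to compare your sketch against.

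That said, your outline is exactly the method of \cite{MR2231044}: a Cantor-type construction in which, inside each surviving $\theta F_n$, one keeps the sub-rectangles supplied by (\ref{eq:cond1}) after discarding those that meet a resonant set at scale $J(n+1)$ via (\ref{eq:cond2}), leaving at least $(\kappa_1-\kappa_2)\,\mu(\theta F_n)/\mu(\theta F_{n+1})$ children; one then pushes a mass through the tree and runs the mass distribution principle, handling the anisotropy of the rectangles by separating the long and short directions. One genuine slip: your ``monotonicity of $\rho_t$'' step points the wrong way. For $\alpha\in J(n+1)$ you have $k^n\le\beta_\alpha<k^{n+1}$, so monotonicity gives $\rho_t(k^{n+1})\le\rho_t(\beta_\alpha)$, which is not what you need. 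What is actually required is a uniform lower bound for $\rho_t(k^{n+1})/\rho_t(k^n)$, i.e.\ a regularity condition of the type $\rho_t(kr)\asymp\rho_t(r)$; this holds trivially for the power functions $\rho_t(r)=r^{-i_t}$ used in the paper's application (the ratio is $k^{-i_t}$), and is built into the general hypotheses of \cite{MR2231044}, but it does not follow from monotonicity alone. A second minor point: condition (\ref{eq:cond3}) is not really needed to ``seed'' the construction---there are only finitely many $\alpha$ with $\beta_\alpha<k$, and these can be avoided directly. In the general framework of \cite{MR2231044} the analogue of (\ref{eq:cond3}) is tied to the ambient measure; in the Lebesgue simplification here it essentially just records that $\cup_\alpha R_\alpha$ is Lebesgue-null.
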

\noindent
We can now prove Theorem~\ref{thm:dimbad}.
\begin{proof}[Proof of Theorem~\ref{thm:dimbad}]
	Fix two positive reals $i,j$ with $i+j=1$ and some $\mathbf{x} \in \Bad(i, j)$.
	It is once more assumed that $i, j >0$, for in this case the theorem
	would otherwise follow immediately from Corollary 1 of \cite{BHKV}.
	With reference to the above framework, set
	\begin{eqnarray*}
		& & J:=\left\{q \in	\mathbb{Z}_{\neq 0} \right\},	\quad \alpha:=q \in J, \quad R_{\alpha}:= R_q =
		\left\{ q \mathbf{x}+\mathbf{p}: \mathbf{p} \in \mathbb{Z}^2 \right\} \\
		%& & \\
		& & \beta_{\alpha}:= \beta_q = \infabs{q}, \quad \rho_1(r):=1/r^i \quad \text{and} \quad \rho_2(r):=1/r^j.   \
	\end{eqnarray*}
	\noindent
	By design we then have
	\begin{equation*}
  	\Bad(\mathcal{R}, \beta, \rho_1, \rho_2) = \Bad^{\mathbf{x}}(i, j)
  \end{equation*}
	and so the proof is reduced to showing that the conditions of Theorem~KTV are satisfied.
	
	For $k > 1$ and $m \geq 1$, let  $F_m$ be a generic closed rectangle with centre in $[0,1]^2$ and
	of side lengths $2k^{-mi}$ and $2k^{-mj}$
	respectively . For $k $ sufficiently large	and any $\theta \in \mathbb{R}_{>0}$ it is
	clear that there exists a collection $\mathcal{C}(\theta B_m)$ of closed rectangles $2\theta F_{m+1}$ within
	$\theta F_m$ each of side lengths $4\theta k^{-(m+1)i}$ and $4\theta k^{-(m+1)j}$ respectively. Moreover, the
	number of rectangles in this collection exceeds
	\begin{equation*}
		\left\lfloor \frac{2\theta k^{-mi}}{4\theta k^{-(m+1)i}} \right\rfloor \times
		\left\lfloor \frac{2\theta k^{-mj}}{4\theta k^{-(m+1)j}} \right\rfloor.
	\end{equation*}
	Here, the notation $\left\lfloor \, . \, \right\rfloor$ denotes the integer part.
	For large enough $k$ the above is strictly positive and is bounded below by
	\begin{eqnarray*}
		\frac{1}{2} \left( \frac{2\theta k^{-mi}}{4\theta k^{-(m+1)i}} \right) \times
		\frac{1}{2} \left( \frac{2\theta k^{-mj}}{4\theta k^{-(m+1)j}} \right)
		& = & \frac{1}{16}\left( \frac{4\theta^2 k^{-m(i+j)}}{4\theta^2 k^{-(m+1)(i+j)}} \right) \\
		& = & \frac{1}{16} \frac{\mu(\theta F_m)}{\mu(\theta F_{m+1})}.  \
	\end{eqnarray*}
	Hence, inequality (\ref{eq:cond1}) holds with $\kappa_1: = 1/16$.
	
	We endeavour to show that the additional condition
	(\ref{eq:cond2}) on the collection $\mathcal{C}(\theta F_m)$ is
	satisfied.  To this end, we fix $m \geq 1$	and  proceed as follows.
	Choose two members of distinct moduli from the set $J(m+1)$; i.e., choose two integers $q$ and $q'$
	such that
  \begin{equation}
  	\label{eqn:help}
  	k^m \leq \infabs{q'}< \infabs{q} < k^{m+1}.
	\end{equation}
	Associated with the integers $q$ and $q'$ are the resonant sets $R_q$ and $R_{q'}$, whose elements take the form
	$q\mathbf{x}+\mathbf{p}$ and $q'\mathbf{x}+\mathbf{p'}$ respectively (for some $\mathbf{p}, \mathbf{p'}
	\in \mathbb{Z}^2$). Consider the minimum distance between a point in $R_q$ and one in $R_{q'}$. For $t=1,2$,
	\begin{eqnarray*}
		\infabs{(qx_t+p_t)-(q'x_t+p'_t)} & = & \infabs{(q-q')x_t +p_t-p'_t} \\
		& \geq & \norm{(q-q')x_t}. \
	\end{eqnarray*}
	Since $\mathbf{x} \in \Bad(i, j)$ either
	\begin{equation*}
		\norm{(q-q')x_1} \quad \geq
		\quad \left( \frac{c(\mathbf{x})}{\infabs{q-q'}} \right)^i
		\quad \stackrel{(\ref{eqn:help})}{>} \quad \left( \frac{c(\mathbf{x})}{2k^{m+1}} \right)^i
	\end{equation*}
	or
	\begin{equation*}
		\norm{(q-q')x_2} \quad \geq
		\quad \left( \frac{c(\mathbf{x})}{\infabs{q-q'}} \right)^j
		\quad \stackrel{(\ref{eqn:help})}{>} \quad \left( \frac{c(\mathbf{x})}{2k^{m+1}} \right)^j.	
	\end{equation*}
	Therefore, if we set
	\begin{equation*}
		\theta: = \frac{1}{2} \min \left\{ \left( \frac{c(\mathbf{x})}{2k} \right)^i,
		\left( \frac{c(\mathbf{x})}{2k} \right)^j \right\}
	\end{equation*}
	then the rectangle $\theta F_m$ has respective side lengths
	\begin{equation*}
		2 \theta k^{-mi} \quad = \quad \min\left\{ \left( \frac{c(\mathbf{x})}{2k} \right)^i,
		\left( \frac{c(\mathbf{x})}{2k} \right)^j \right\} k^{-mi} \quad
		\leq \quad \left( \frac{c(\mathbf{x})}{2k^{m+1}} \right)^i
	\end{equation*}
	and
	\begin{equation*}
		2 \theta k^{-mj} \quad = \quad \min\left\{ \left( \frac{c(\mathbf{x})}{2k} \right)^i,
		\left( \frac{c(\mathbf{x})}{2k} \right)^j \right\} k^{-mj} \quad
		\leq \quad \left( \frac{c(\mathbf{x})}{2k^{m+1}} \right)^j.
	\end{equation*}
	So, for any two integers $q, q'$ of distinct moduli in $J(m+1)$, if a member of $R_q$
	lies in $\theta F_m$ then no members of $R_{q'}$ may lie in $\theta F_m$.
	Only one point of $R_q$ may lie in $\theta F_m$ (since $\mu(\theta F_m)<1$) and so only two points over all
	possible resident sets may lie in any rectangle $\theta F_m$; those corresponding to $q$ and $-q$. Hence,
	\begin{equation*}
    \# \left\{ 2 \theta F_{m+1} \subset \mathcal{C}(\theta F_m):
    R_{q} \cap 2 \theta F_{m+1} \neq \emptyset \, \text{ for some }  q \in J(m+1)\right\} \, \leq \, 2,
  \end{equation*}
  which for large enough $k$ is certainly less than
  \begin{equation*}
		\frac{k}{32} \quad = \quad \frac{1}{32} \frac{\mu \left(\theta F_m \right)}
		{\mu \left(\theta F_{m+1} \right)}.
	\end{equation*}
	So, with $\theta$ as defined above and with $\kappa_2: = 1/32 < \kappa_1$, the collection
	$\mathcal{C}(\theta F_m)$ satisfies inequality (\ref{eq:cond2}).
	
	Finally, note that the family $\mathcal{R}$ of resonant sets takes the form of a countable number of countable
	sets and so
  \begin{equation*}
    \dim \left( \cup_{q \in J} \, R_q \right) \, = \, 0
  \end{equation*}
  and inequality (\ref{eq:cond3}) trivially holds. Thus, the conditions of Theorem~KTV are
	satisfied and the theorem follows.
\end{proof}

\section{Appendix}
\label{sec:Appendix}

We conclude the paper by proving a general result implying Theorems \ref{thm:fullmeasure} \& \ref{thm:multmetrical} as stated in the main body of the paper.
The result is an extension of
Cassels' inhomogeneous Khintchine-type theorem \cite[Chapter VII, Theorem II]{Cas57}.
The proof is a modification of Cassels' original argument and also borrows ideas from the
work of Gallagher.
\begin{thm}
	\label{thm:doublymetric}
	For any sequence $\left\{ A_q \right\}_{q \in \mathbb{N}}$ of measurable subsets of $[0,1)^d$ let $A$ denote the set of all pairs
	$(\mathbf{x}, \boldsymbol\gamma) \in [0,1)^d \times [0,1)^d$ for which
	there exists infinitely many $q \in \mathbb{N}$ and $\mathbf{p} \in \mathbb{Z}^d$ such that
	\begin{equation}
		\label{eqn:doublymetric}
		q\mathbf{x} - \boldsymbol\gamma - \mathbf{p} \, \in \, A_q.
	\end{equation}
	Then,
	\begin{equation*}
		\mu_{2d}(A): =
			\begin{cases}
				0, & \quad \displaystyle\sum_{r=1}^{\infty} \mu_d({A_r}) \, < \, \infty, \\
  			1, & \quad \displaystyle\sum_{r=1}^{\infty} \mu_d({A_r}) \, = \, \infty,
  		\end{cases}
	\end{equation*}
	where $\mu_s$ denotes $s$-dimensional Lebesgue measure.
\end{thm}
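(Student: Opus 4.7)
The plan is to work in the product space $[0,1)^d \times [0,1)^d$ equipped with the probability measure $\mu_{2d}$, and to apply Borel--Cantelli-type arguments to the doubly metric level sets
\[
B_q := \bigl\{ (\mathbf{x}, \boldsymbol\gamma) \in [0,1)^{2d} : q\mathbf{x} - \boldsymbol\gamma - \mathbf{p} \in A_q \text{ for some } \mathbf{p} \in \mathbb{Z}^d \bigr\},
\]
so that $A = \limsup_{q \to \infty} B_q$.

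First I would compute $\mu_{2d}(B_q) = \mu_d(A_q)$. Fixing $\mathbf{x}$, the $\boldsymbol\gamma$-section of $B_q$ is (modulo $1$) a translate of $-A_q$; translation invariance of Haar measure on the torus combined with Fubini yields the claim. The convergence half is then immediate: if $\sum_r \mu_d(A_r) < \infty$, the classical Borel--Cantelli lemma forces $\mu_{2d}(A) = 0$.

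The heart of the argument is the divergence half. The plan is to prove that the events $\{B_q\}_{q \in \mathbb{N}}$ are in fact \emph{pairwise independent} with respect to $\mu_{2d}$, and then invoke the Erd\H{o}s--R\'enyi strengthening of the divergent Borel--Cantelli lemma. For $q \neq q'$, fix $\mathbf{x}$ and substitute $\boldsymbol\eta = q\mathbf{x} - \boldsymbol\gamma \pmod 1$; then the $\boldsymbol\gamma$-section of $B_q \cap B_{q'}$ has measure
\[
\mu_d\bigl(A_q \cap (A_{q'} - (q' - q)\mathbf{x})\bigr),
\]
with the set difference interpreted modulo $1$. Integrating in $\mathbf{x}$ and swapping the order with $\boldsymbol\eta \in A_q$ via Fubini, the inner integral becomes
\[
\int_{[0,1)^d} \mathbf{1}_{A_{q'}}\bigl(\boldsymbol\eta + (q' - q)\mathbf{x}\bigr)\, d\mathbf{x} = \mu_d(A_{q'}),
\]
since the map $\mathbf{x} \mapsto (q' - q)\mathbf{x}$ is a measure-preserving self-covering of the torus whenever $q' - q \neq 0$. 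The upshot is the exact factorisation $\mu_{2d}(B_q \cap B_{q'}) = \mu_d(A_q)\mu_d(A_{q'}) = \mu_{2d}(B_q)\mu_{2d}(B_{q'})$.

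With pairwise independence in hand and $\sum_q \mu_{2d}(B_q) = \infty$, the Erd\H{o}s--R\'enyi form of Borel--Cantelli delivers $\mu_{2d}(\limsup_q B_q) = 1$, which is the desired $\mu_{2d}(A) = 1$. The main subtlety — and the reason the doubly metric framework is crucial — is that independence fails at the level of a single $\mathbf{x}$-fibre: the sets $q\mathbf{x} - A_q \pmod 1$ sitting inside a fixed $\mathbf{x}$-slice are in general highly correlated, and it is only after the second integration in $\mathbf{x}$, exploiting the uniform distribution of $(q' - q)\mathbf{x}$ on the torus, that the correlations vanish exactly. This is precisely the point at which the argument both modifies and substantially simplifies Cassels' original single-metric inhomogeneous proof.
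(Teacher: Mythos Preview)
Your proposal is correct and follows essentially the same route as the paper: both arguments reduce the divergence case to the exact factorisation $\mu_{2d}(B_q\cap B_{q'})=\mu_d(A_q)\mu_d(A_{q'})$ for $q\neq q'$, obtained via the same change of variables and the fact that $\mathbf{x}\mapsto (q'-q)\mathbf{x}$ is measure-preserving on the torus (the paper's Lemma~\ref{lem:cassels}). The only cosmetic difference is that the paper writes out the second-moment computation and invokes the Paley--Zygmund inequality explicitly, whereas you package the same calculation as pairwise independence and cite the Erd\H{o}s--R\'enyi form of Borel--Cantelli; these are two names for the same second-moment argument.
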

\begin{proof}
	We begin by considering the case in which the sum $\sum_{r=1}^{\infty} \mu_d({A_r})$ converges. Fix $\boldsymbol\gamma \in [0, 1)^d$. For each natural number $q$ a vector $\mathbf{x}$ satisfying (\ref{eqn:doublymetric})
	uniquely determines the integral vector $\mathbf{p}$ in such a way that $\infabs{\mathbf{p}} < q$. Therefore, the measure of the set of all $\mathbf{x} \in [0, 1)^d$ that satisfy
	(\ref{eqn:doublymetric}) for each $q$ is given by
	\begin{equation*}
		\mu_d \left( \bigcup_{\mathbf{p} \in \left[ 0,\right. \left. q \right)^d} \frac{\left(A_q \oplus \boldsymbol\gamma \right) \oplus \mathbf{p}}{q}\right) \quad = \quad
		\sum_{\mathbf{p} \in \left[ 0,\right. \left. q \right)^d} \mu_d \left(\frac{\left(A_q \oplus \boldsymbol\gamma \right) \oplus \mathbf{p}}{q}\right),
	\end{equation*}
	since the union is disjoint. The dilation property of $\mu_d$ yields that this is equivalent to
	\begin{equation*}
		q^{-d} \sum_{\mathbf{p} \in \left[ 0,\right. \left. q \right)^d} \mu_d \left(\left(A_q \oplus \boldsymbol\gamma \right) \oplus \mathbf{p}\right) \quad = \quad
		q^{-d} q^d \cdot \mu_d \left(A_q \oplus \boldsymbol\gamma \right) \quad = \quad \mu_d \left(A_q \right),
	\end{equation*}
	by the translational invariance of $\mu_d$. Now, if $\sum_{r=1}^{\infty} \mu_d({A_r}) < \infty$, then for any $\epsilon > 0$ the set of vectors satisfying (\ref{eqn:doublymetric}) for
	any $q \geq Q$ has measure at most $\sum_{q\geq Q}\mu_d({A_q}) < \epsilon$ for large enough $Q$. In particular, the set of $\mathbf{x}$ with infinitely many solutions to
	(\ref{eqn:doublymetric}) has measure at most $\epsilon$. This completes the proof of the convergence case.

	Let us now assume that the sum $\sum_{r=1}^{\infty} \mu_d({A_r})$ diverges. Define the function $\alpha_q: \mathbb{R}^d \rightarrow \mathbb{R}$ for each natural number $q$ as follows. Let
	\begin{equation*}
		\alpha_q(\mathbf{x}): \, = \,
		\begin{cases}
				1, & \quad \exists \, \mathbf{p} \in \mathbb{Z}^d \text{ s.t. } \mathbf{x}-\mathbf{p} \in A_q. \\
  			0, & \quad \text{otherwise}.
  		\end{cases}
	\end{equation*}
	It is clear that each $\alpha_q$ is measurable since it is equivalent to the characteristic function of a countable union of measurable sets in $\mathbb{R}^d$. Next, for every
	natural number $Q$ define the	function $\mathcal{A}_Q: [0, 1)^d \times [0, 1)^d \rightarrow  \mathbb{R}$ by
	\begin{equation*}
		\mathcal{A}_Q(\mathbf{x}, \boldsymbol\gamma): \, = \, \sum_{q \leq Q} \alpha_q(q\mathbf{x}-\boldsymbol\gamma).
	\end{equation*}
	We wish to verify that $\mathcal{A}_Q$ is measurable. To that end, we introduce the following lemma, which is a generalisation of a well known result in measure theory and
	follows via simple modification of the classical proof (see for example \cite[Chapter 2, Proposition 3.9]{SS}).
	\begin{lem}
		\label{lem:translation}
		If $f$ is a measurable function on $\mathbb{R}^d$ then it follows that the function $F_q(\mathbf{x}, \boldsymbol\gamma):=f(q\mathbf{x}- \boldsymbol\gamma)$ is measurable on $\mathbb{R}^d \times \mathbb{R}^d$
		for every natural number $q$.
	\end{lem}
	
	Since $\alpha_q$ is finite valued (and finite sums of finite valued measurable functions are measurable functions) Lemma \ref{lem:translation} implies that $\mathcal{A}_Q$ is indeed
	measurable on $[0, 1)^d \times [0, 1)^d$. Furthermore, by construction, it is apparent that $\mathcal{A}_Q(\mathbf{x}, \boldsymbol\gamma)$ is simply the number of natural $q$ with
	$q\leq Q$ such that
	\begin{equation*}
		q\mathbf{x} - \boldsymbol\gamma - \mathbf{p} \, \in \, A_q \quad \quad \quad \text{ for some } \mathbf{p} \in \mathbb{Z}^d.
	\end{equation*}
	Hence, to complete the proof of Theorem~\ref{thm:doublymetric} it suffices to show  $\mathcal{A}_Q(\mathbf{x}, \boldsymbol\gamma) \rightarrow \infty$ almost everywhere as $Q \rightarrow \infty$.
	We will hereafter consider $\mathcal{A}_Q$ as a random variable in a probability space with probability measure $\mu_d$.
	
	For any positive measurable function
	$f:[0,1)^d \times [0,1)^d \rightarrow \mathbb{R}_{\geq 0}^d$ we denote the \textit{expectation} of $f$ by
	\begin{equation*}
		E(f):= \displaystyle\int_{[0,1)^d} \int_{[0,1)^d} f(\mathbf{x}, \boldsymbol\gamma) \, d\mathbf{x} \, d\boldsymbol\gamma.
	\end{equation*}
	If the \textit{variance} $V(f):=E(f^2)-E(f)^2$ of $f$ is finite then the famous Paley-Zygmund inequality (see for example \cite[Ineq. II, p.8]{Kah})
	states that
	\begin{equation*}
		\mu_d\left( \left\{(\mathbf{x}, \boldsymbol\gamma): f(\mathbf{x}, \boldsymbol\gamma) \geq \epsilon E(f) \right\} \right)  \, \, \,  \geq \, \, \,  (1-\epsilon)^2\frac{(E(f))^2}{E(f^2)},
	\end{equation*}
	for any sufficiently small $\epsilon>0$. We will use this inequality to reach our desired conclusion.
	
	Before applying the Paley-Zygmund inequality to $\mathcal{A}_Q$ we must show that $V(\mathcal{A}_Q)$ is finite. It suffices to show that both $E(\mathcal{A}_Q)$ and
	$E((\mathcal{A}_Q)^2)$ are finite. To do this we require the following lemma \cite[Chapter VII, Lemma 3]{Cas57}.
	\begin{lem}[Cassels]
		\label{lem:cassels}
		Let $\alpha$ be a measurable function of period one of the variable $\mathbf{x} \in \mathbb{R}^d$. Then,
		\begin{equation*}
			\displaystyle\int_{[0,1)^d} \alpha(q\mathbf{x}+\boldsymbol\gamma) \, d\mathbf{x} \, \, = \, \, \int_{[0,1)^d} \alpha(\mathbf{x}) \, d\mathbf{x},
		\end{equation*}
		for any vector $\boldsymbol\gamma \in \mathbb{R}^d$ and any integer $q \neq 0$. 	
	\end{lem}
	
	We note that $\alpha_q$ is of period one and so
	%following the arguments Cassels laid down in his original proof,
	\begin{eqnarray}
		E(\mathcal{A}_Q)  & = & \int_{[0,1)^d} \int_{[0,1)^d} \mathcal{A}_Q(\mathbf{x}, \boldsymbol\gamma) \, d\mathbf{x} \, d\boldsymbol\gamma \nonumber \\
		& =
		%\stackrel{\text{def.}}{=}
		&		\sum_{q \leq Q} \int_{[0,1)^d} \int_{[0,1)^d} \alpha_q(q\mathbf{x}-\boldsymbol\gamma) \, d\mathbf{x} \, d\boldsymbol\gamma \nonumber \\
		& \stackrel{\text{Lem. \ref{lem:cassels}}}{=}
		& \sum_{q \leq Q} \int_{[0,1)^d} \int_{[0,1)^d} \alpha_q(\mathbf{x}) \, d\mathbf{x} \, d\boldsymbol\gamma \nonumber \\
		& =
		%\stackrel{\text{def.}}{=}
		& \sum_{q \leq Q} \int_{[0,1)^d} \int_{[0,1)^d} \chi_{A_q}(\mathbf{x}) \, d\mathbf{x} \, d\boldsymbol\gamma \nonumber \\
%		& = & \sum_{q \leq Q} \int_{[0,1)^d}  \mu_d(A_q) \, d\boldsymbol\gamma \\
		\label{eqn:expectation}
		& = & \sum_{q \leq Q} \mu_d(A_q), \
	\end{eqnarray}
	which is indeed finite. Further,
	\begin{eqnarray*}
		E((\mathcal{A}_Q)^2)  & = & \int_{[0,1)^d} \int_{[0,1)^d} (\mathcal{A}_Q(\mathbf{x}, \boldsymbol\gamma))^2 \, d\mathbf{x} \, d\boldsymbol\gamma  \\
		& = &		\sum_{q, r \leq Q} \int_{[0,1)^d} \int_{[0,1)^d} \alpha_q(q\mathbf{x}-\boldsymbol\gamma)\alpha_r(r\mathbf{x}-\boldsymbol\gamma) \, d\mathbf{x} \, d\boldsymbol\gamma  \\
		& = &		\sum_{q, r \leq Q} \int_{[0,1)^d} \int_{[0,1)^d} \alpha_{r-s}(-\boldsymbol\gamma')\alpha_r(s\mathbf{x}'-\boldsymbol\gamma') \, d\mathbf{x}' \, d\boldsymbol\gamma' , \
	\end{eqnarray*}
	via the change of variables $\mathbf{x}':=\mathbf{x}$, $\boldsymbol\gamma':=\boldsymbol\gamma-q\mathbf{x}$ and $s:=r-q$. Here, the range of
	$\mathbf{x}'$ and $\boldsymbol\gamma'$ can both be taken as $[0,1)^d$ since the function $\alpha_q$ is periodic.
	Let
	\begin{equation*}
		\mathcal{A}^{(r, s)}(\mathbf{x'}, \boldsymbol\gamma'): \, = \, \int_{[0,1)^d}\int_{[0,1)^d}\alpha_{r-s}(-\boldsymbol\gamma')\alpha_r(s\mathbf{x}'-\boldsymbol\gamma') \,
		d\mathbf{x}' \, d\boldsymbol\gamma'.
	\end{equation*}
	Then,
	if $r=q$ then $s=0$ and we have
		\begin{eqnarray*}
		\mathcal{A}^{(r, s)}(\mathbf{x'}, \boldsymbol\gamma') & = & \int_{[0,1)^d} \int_{[0,1)^d} \left( \alpha_q(-\boldsymbol\gamma') \right)^2
		\, d\mathbf{x}' \, d\boldsymbol\gamma' \\
		& =
		& \int_{[0,1)^d} \int_{[0,1)^d} \alpha_q(-\boldsymbol\gamma') \, d\mathbf{x}' \, d\boldsymbol\gamma' \\
		& = & \mu_d(A_{q}). \
	\end{eqnarray*}
	However, if $r\neq q$ then $s \neq 0$ and we get
	\begin{eqnarray*}
		\mathcal{A}^{(r, s)}(\mathbf{x'}, \boldsymbol\gamma') & = & \int_{[0,1)^d} \alpha_{r-s}(-\boldsymbol\gamma') d\mathbf{x}'
		\int_{[0,1)^d} \int_{[0,1)^d} \alpha_r(s\mathbf{x}'-\boldsymbol\gamma')	 \, d\mathbf{x}' \, d\boldsymbol\gamma' \\
		& \stackrel{\text{Lem. \ref{lem:cassels}}}{=} & \mu_d(A_{r-s})  \int_{[0,1)^d} \int_{[0,1)^d} \alpha_r(\mathbf{x}') \, d\mathbf{x}' \, d\boldsymbol\gamma' \\
		& = & \mu_d(A_{q})\mu_d(A_{r}).
	\end{eqnarray*}	
	These equivalences yield that
	\begin{eqnarray*}
		E((\mathcal{A}_Q)^2)  & = & \sum_{q, r \leq Q} \mathcal{A}^{(r, s)}(\mathbf{x'}, \boldsymbol\gamma') \\
		& = & \sum_{q\leq Q} \mu_d(A_{q}) + \sum_{\stackrel{q, r\leq Q:}{q\neq r}} \mu_d(A_{q})\mu_d(A_{r}) \\
		& \leq & \sum_{q\leq Q} \mu_d(A_{q}) + \left(\sum_{q\leq Q} \mu_d(A_{q}) \right)^2 \\
		& \leq & (1-\epsilon)^{-2} \left(\sum_{q\leq Q} \mu_d(A_{q}) \right)^2  \\
		& = & (1-\epsilon)^{-2} \left(E(\mathcal{A}_Q) \right)^2,\
	\end{eqnarray*}
	for any sufficiently small $\epsilon>0$ and large enough $Q$ (because $\sum_{q\leq Q} \mu_d(A_{q}) \rightarrow \infty$ as $Q \rightarrow \infty$
	by assumption). Note that the final bound is finite as required.	
	
	In view of the Paley-Zygmund inequality we have that
	\begin{equation*}
		\mu_d\left( \left\{(\mathbf{x}, \boldsymbol\gamma): \mathcal{A}_Q(\mathbf{x}, \boldsymbol\gamma) \geq \epsilon \sum_{q\leq Q} \mu_d(A_{q})
		\right\} \right)  \, \, \,  \geq \, \, \,  (1-\epsilon)^4 \, \, \, \geq \, \, \, 1-4\epsilon.
	\end{equation*}
	Finally, since $\mathcal{A}_Q$ increases
	monotonically with $Q$, we have that $\mathcal{A}_Q(\mathbf{x}, \boldsymbol\gamma) \rightarrow \infty$ in $[0,1)^d \times [0,1)^d$ except on a
	set of measure at most $4\epsilon$. This completes the proof as the choice of $\epsilon$ is arbitrary.	
\end{proof}

\subsection*{Acknowledgements}
\label{sec:Acknowledgements}
The author would like to thank Sanju Velani for his enthusiastic encouragement and support, and for introducing him to the problems at hand. He would also like to thank the referees for their useful remarks and Simon Eveson for his helpful suggestions. This research was funded by the ESPRC.

\end{document}